\newtheorem{proposition}{Proposition}[section]
\newtheorem{lemma}[proposition]{Lemma}
\newtheorem{corollary}[proposition]{Corollary}
\newtheorem{theorem}[proposition]{Theorem}
\theoremstyle{definition}
\newtheorem{definition}[proposition]{Definition}
\theoremstyle{remark}
\newtheorem{remark}[proposition]{Remark}
\newtheorem{remarks}[proposition]{Remarks}
\newcommand{\thlabel}[1]{\label{th:#1}}
\newcommand{\thref}[1]{Theorem~\ref{th:#1}}
\newcommand{\selabel}[1]{\label{se:#1}}
\newcommand{\seref}[1]{Section~\ref{se:#1}}
\newcommand{\lelabel}[1]{\label{le:#1}}
\newcommand{\leref}[1]{Lemma~\ref{le:#1}}
\newcommand{\prlabel}[1]{\label{pr:#1}}
\newcommand{\colabel}[1]{\label{co:#1}}
\newcommand{\coref}[1]{Corollary~\ref{co:#1}}
\newcommand{\relabel}[1]{\label{re:#1}}
\newcommand{\reref}[1]{Remark~\ref{re:#1}}
\newcommand{\delabel}[1]{\label{de:#1}}
\newcommand{\deref}[1]{Definition~\ref{de:#1}}
\newcommand{\Vect}{{\sf Vec}}
\newcommand{\id}{{\sf id}\,}
\newcommand{\coev}{{\rm coev}}
\newcommand{\ev}{{\rm ev}}
\newcommand{\Hom}{{\rm Hom}}
\newcommand{\Rep}{{\sf Rep\hbox{-}}}
\def\ot{\otimes}
\def\id{\textrm{{\small 1}\normalsize\!\!1}}
\def\CC{{\mathbb C}}
\def\NN{{\mathbb N}}
\def\ZZ{{\mathbb Z}}
\newcommand{\Aa}{\mathcal{A}}
\newcommand{\Bb}{\mathcal{B}}
\newcommand{\Cc}{\mathcal{C}}
\newcommand{\Dd}{\mathcal{D}}
\newcommand{\Ee}{\mathcal{E}}
\newcommand{\Hh}{\mathcal{H}}
\newcommand{\Mm}{\mathcal{M}}
\newcommand{\Pp}{\mathcal{P}}
\newcommand{\Qq}{\mathcal{Q}}
\def\text#1{{\rm {\rm #1}}}
\begin{document}
\title[The Witt group of a braided monoidal category]{The Witt group of a braided monoidal category}

\author{Isar Goyvaerts}
\address{Department of Mathematics, Faculty of Engineering, Vrije Universiteit Brussel, Pleinlaan 2, B-1050 Brussel, Belgium}
\email{igoyvaer@vub.ac.be}
\author{Ehud Meir}
\address{Centre for Symmetry and Deformation, Institut for Matematiske Fag, Universitetsparken 5, 2100 K\o benhavn \O, Denmark}
\email{meir@math.ku.dk}
\begin{abstract}
We develop the Witt group for certain braided monoidal categories with duality. In case of a braided fusion category over an algebraically closed field of characteristic zero, 
we explictly describe this structure. We then use this description to prove that this tool provides an invariant for finite isocategorical groups. 
As an application, we show that all groups of order less than 64 are categorically rigid.
\end{abstract}

\maketitle
\section*{Introduction}\selabel{Intro}
In this paper, we develop the Witt group for $k$-linear braided monoidal categories with duality, where $k$ is a commutative ring with $2$ being an invertible element (categories of finite-dimensional complex representations of finite groups will be our main concern). Our device finds its roots in the theory of quadratic forms.
In 1937, Ernst Witt introduced a group structure -and even a ring structure- on the set of isometry classes of anisotropic quadratic forms over an arbitrary field $k$.
This object is now called the Witt group of $k$ and often denoted by $W(k)$.
Since then, Witt's construction for fields has been generalized in many directions; to commutative rings  (cf. \cite{Bass})
and to various types of categories with duality (cf.  \cite{Quebbemann}), to name only but a few.
Rather than focusing on this rich tradition, let us turn towards the setting in which the content of the present paper is situated.
\\Let $k$ be a commutative ring with $\frac{1}{2}\in k$ and let $\Cc$ be a $k$-linear, braided monoidal category with duality.
We propose to study ``quadratic objects'' in $\Cc$ that are ``non-singular'' (they coincide with the classical notion of non-degenerate quadratic module,
when $\Cc$ is taken to be the category of finitely generated projective $k$-modules). Following the treatments in \cite{Bass} and \cite{Quebbemann},
we construct a group whose elements are isometry classes of such objects, modulo the image of the so-called hyperbolic functor.
This group -which will be called the \textit{Witt group of $\Cc$} and denoted $W(\Cc)$, by analogy with the classical theory-
can be endowed with the structure of a commutative, unital ring, provided the braiding is a symmetry. This is done in \seref{QuadraForm}.
\\In case $\Cc$ is a braided fusion category over an algebraically closed field of characteristic $0$, we give a description of $W(\Cc)$ in \seref{semisimple}. 
\\Next, in \seref{examples}, we compute concrete examples arising from finite groups. In particular, we provide a description of $W({\Rep}G)$, resp. $W(\Rep{D(G)})$, the Witt ring of the category of finite-dimensional complex representations of a finite group $G$, resp. of the Drinfeld double of $G$.
%
\\We then use these descriptions to address the question of non-isocategoricity of finite groups.
\\In \cite{EtiGel}, Etingof and Gelaki define two finite groups $G$ and $H$ to be isocategorical if ${\Rep}G$ and ${\Rep}H$ are equivalent as monoidal categories
without regard of the symmetric structure (if they are equivalent as symmetric monoidal categories, $G$ and $H$ must be isomorphic, cf. \cite{Deligne} for instance).
Now, the property of two groups being isocategorical is much stronger than the property of their categories of representations having isomorphic Grothendieck rings.
For example, it is known that the two non-abelian groups of order 8 are not isocategorical,
although the corresponding Grothendieck rings (i.e. the character rings) are isomorphic (cf. \cite{TambaraYam} e.g.).
This raised the question whether isocategorical groups must be isomorphic. This is not the case; cf. \cite{EtiGel}, \cite{DavydovGal} and \cite{IzuKosa},
where examples of isocategorical, yet non-isomorphic groups are constructed. 
In spite of this fact, all finite groups isocategorical to a given group can be classified in group-theoretical terms (cf. \cite[Theorem 1.3]{EtiGel}).
However, it is not always easy to check whether two arbitrary finite groups are isocategorical or not, even when using this classification result.
In \seref{IsoCat}, we show that $W(\Rep{?})$ (and $W(\Rep{D(?)})$) are invariants for finite isocategorical groups. 
We remark that this fact can as well be proven using a theorem of Mason and Ng (namely Theorem 4.1. in their paper \cite{MasonNg}). 
We provide here a different proof, based on the group-theoretical description of finite isocategorical groups by Etingof and Gelaki from \cite{EtiGel}.
\\As an application, we provide examples of finite groups that have isomorphic character rings and isomorphic Witt rings,
yet are not isocategorical, thus showing that the character ring and the Witt ring of $G$ are not enough to distinguish the isocategoricity type of $G$.
Shimizu proved in \cite{Shimizu} that if two groups are isocategorical, then their number of elements of order $n$ is equal for every natural $n$.
We will present in \seref{Order 32} an example of two groups having isomorphic Grothendieck rings, Witt rings, and the same number of elements of any given order,
yet not being isocategorical, thus proving that these invariants are still not enough to distinguish isocategoricity classes of finite groups. Combining the results of Etingof and Gelaki, the Grothendieck and Witt rings, 
one obtains that all groups of order less than 32 are categorically rigid 
(a finite group $G$ is called categorically rigid if any group isocategorical to $G$ is actually isomorphic to $G$); a result which has been proved -by different means- 
by Shimizu in \cite{Shimizu}. In \seref{Order 32}, examining the above mentioned pair of groups, we show that all groups of order 32 are categorically rigid as well. Finally, again combining the result of Etingof and Gelaki, the Grothendieck and Witt rings, one checks that all groups of order greater than 32 and less than 64 are categorically rigid as well, hereby concluding that the smallest known example of two isocategorical, yet non-isomorphic groups appearing in literature
(which is of order 64, as described by Izumi and Kosaki in \cite{IzuKosa}), indeed is the smallest possible one.

\section{Notational conventions}
For any two objects $X,Y$ in a category $\Aa$, we denote the Hom-set from $X$ to $Y$ as $\Hom_\Aa(X,Y)$ or shortly by $\Hom(X,Y)$ 
if there can be no confusion about the category $\Aa$. The identity morphism on $X$ is denoted by $1_X$.
Let $\Aa$ and $\Bb$ be two categories. A functor $F:\Aa\to \Bb$ will be the name for a covariant functor; 
it will only be a contravariant one if it is explicitly mentioned. 
The symbol $\id_{\Aa}$ denotes the identity functor on $\Aa$.
\\$\Cc=(\Cc,\ot, I)$, will be our notation for a (strict) monoidal category, where $I$ is the unit object of $\Cc$.

\section{Quadratic Forms in Braided Monoidal Categories}\selabel{QuadraForm}
Throughout the paper, $k$ will be a commutative ring such that $\frac{1}{2}\in k$.
In the classical theory, a quadratic form $q$ on a finitely generated projective module $P$ over $k$
is defined as a map $q:P\to k$ satifying
\begin{itemize}
\item $q(ax)=a^{2}q(x),~~~~\forall x\in P, a\in k$
\item $b_{q}(x,y):=q(x+y)-q(x)-q(y)$ is a (necessarily symmetric) bilinear form on $P$.
\end{itemize}
If $b$ is a bilinear form on $P$, then $q_{b}:P\to k$ defined by $q_{b}(x):=b(x,x)$ is a quadratic form.
\\Denoting by $b^{t}$ the bilinear form defined by $b^{t}(x,y)=b(y,x)$, one has the following
\begin{lemma}
If $b$ is a bilinear form and $q$ is a quadratic form on $P$, then
\begin{center}
$b_{q_{b}}=b+b^{t}$ and $q_{b_{q}}=2q$.
\end{center}
\end{lemma}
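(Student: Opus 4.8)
The plan is to verify both identities directly by unwinding the definitions, since each side is a fixed type of form (bilinear or quadratic) and it therefore suffices to check equality on arbitrary arguments in $P$.

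For the first identity $b_{q_b}=b+b^{t}$, I would fix $x,y\in P$ and evaluate the polarization of the quadratic form $q_b$ at $(x,y)$. Writing $b_{q_b}(x,y)=q_b(x+y)-q_b(x)-q_b(y)$ and replacing each term $q_b(z)$ by its definition $b(z,z)$, bilinearity of $b$ expands $b(x+y,x+y)$ into the four terms $b(x,x)+b(x,y)+b(y,x)+b(y,y)$. The two diagonal terms cancel against $q_b(x)=b(x,x)$ and $q_b(y)=b(y,y)$, leaving $b(x,y)+b(y,x)=b(x,y)+b^{t}(x,y)=(b+b^{t})(x,y)$, as required.

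For the second identity $q_{b_q}=2q$, I would fix $x\in P$ and compute $q_{b_q}(x)=b_q(x,x)$, using $q_b(z)=b(z,z)$ with $b$ taken to be $b_q$. Unwinding the definition of $b_q$ gives $b_q(x,x)=q(x+x)-q(x)-q(x)=q(2x)-2q(x)$. The only place where the quadratic hypothesis is genuinely invoked is the homogeneity axiom $q(ax)=a^{2}q(x)$ with $a=2$, which gives $q(2x)=4q(x)$ and hence $q_{b_q}(x)=4q(x)-2q(x)=2q(x)$.

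Both computations are elementary, so I expect no real obstacle beyond careful bookkeeping of the definitions; the single substantive step is the appeal to homogeneity in the second identity. It is worth noting that this factor of $2$ is exactly what makes the standing assumption $\frac{1}{2}\in k$ relevant: the relation $q_{b_q}=2q$ shows that $q$ can be recovered from the associated bilinear form as $\frac{1}{2}q_{b_q}$ precisely when $2$ is invertible.
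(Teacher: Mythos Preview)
Your proof is correct and is exactly the standard direct verification one would expect; the paper itself states this lemma without proof, so there is nothing to compare against beyond noting that your argument supplies the routine computation the authors omitted.
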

Using the fact that $P$ is finitely generated projective, one has the following observation:
\begin{itemize}
\item Every quadratic form is of the form $q_{b}$ for a suitable $b$.
\item $q_{b}=q_{b'}$ precisely when $b-b'$ is alternating.
\end{itemize}
Classically, the construction of the Witt group of $k$ is roughly outlined as follows
(we refer to \cite{Bass} for the terminology we use here):
\begin{enumerate}
\item[i)] A suitable definition of \textit{non-singular} quadratic module is given. 
It coincides with the notion of non-singular quadratic spaces in the classical theory when $k$ is a field.
\item[ii)] One defines the \textit{orthogonal sum} of two quadratic modules and proves that the category of non-singular quadratic modules
(with \textit{isometries} as morphisms) is a category \textit{with product}, induced by this orthogonal sum. Let us denote this category with product as $\Qq$.
We remark that we do not mean here a product in the categorical sense. Instead, we think of the product as a functor $\oplus:\Qq\times\Qq\rightarrow \Qq$.
\item[iii)] The \textit{hyperbolic} functor $\Hh$ is defined between the two following categories with product: $\Pp$; the category of finitely generated, projective $k$-modules
(considered with isomorphisms in $\Mm_{k}$ as morphisms and the direct sum $\bigoplus$ of $k$-modules) and $\Qq$.
It is proved that $\Hh$ preserves the product and that it is \textit{cofinal}, i.e. for every $M\in\Qq$ there is an $N\in\Qq$ and $L\in\Pp$ such that $M\oplus N\cong \Hh(L)$. 
The Witt group of $k$ is then defined as ${\rm Coker}(\mathbb{K}_{0}\Hh)$.
\end{enumerate}
We would like to define the Witt group for more general braided monoidal categories then the category ${\rm Proj}_k$ of finitely generated projective $k$-modules.
For this, we consider a rigid braided monoidal $k$-linear category $\Cc$, in which the isomorphism classes form a set.
Some definitions are in order about this terminology (to lighten notation, the category $\Cc$ is considered to be strict; this without being harmful to generality, 
by Mac Lane's coherence theorem):
The ring $k$ is a fixed commutative ring satisfying the assumption $\frac{1}{2}\in k$.
The fact that $\Cc$ is $k$-linear means that the hom sets in $\Cc$ are $k$-modules and that the composition is $k$-bilinear.
Rigidity of $\Cc$ means that $\Cc$ has duality: 
for every object $M\in\Cc$ there is an object $M^*\in \Cc$ and two morphisms ${\rm ev}_M:M^*\ot M\rightarrow I$ and ${\rm coev}_M:I\rightarrow M\ot M^*$ 
(where $I$ is the unit object in $\Cc$) 
such that the two compositions
$$M\longrightarrow M\ot M^*\ot M\rightarrow M\textrm{, }$$ 
$$M^*\rightarrow M^*\ot M\ot M^*\rightarrow M^*$$ 
are the identity morphisms. We call the tuple $(M,M^*,{\rm ev}_M,{\rm coev}_M)$ a dual pair in $\Cc$.
If every object in $\Cc$ has a dual, then the assignment $M\mapsto M^*$ can be extended to a contravariant functor from $\Cc$ to $\Cc$ (see Proposition XIV.2.2. in \cite{K}).
Strictly speaking, $M^*$ is called a right dual to $M$. The left dual $^*M$ is defined in a similar way, where we invert the order of the tensor product.
Since we consider only braided categories, we will see in Lemma \ref{gam} that left and right duals are naturally isomorphic.

The duality axioms imply that the functors of tensor multiplication by $M$ and by $M^*$ are adjoint to each other:
\begin{equation}\label{dual1}\Hom_{\Cc}(M^*\ot X,Y)\cong \Hom_{\Cc}(X,M\ot Y)\textrm { and}\end{equation}
\begin{equation}\label{dual2}\Hom_{\Cc}(X\ot M,Y)\cong \Hom_{\Cc}(X,Y\ot M^*)\end{equation}
We thus have for instance the following isomorphism, for every object $M$ of $\Cc$: 
$$h_{M}:\Hom_{\Cc}(M\ot M,I)\rightarrow \Hom_{\Cc}(M,M^{*})$$
We give the following definition:
\begin{definition}\delabel{def2}
Let $\Cc$ be a monoidal category satisfying the above assumptions and let $M$ be an object in $\Cc$.
A bilinear form $b:M\ot M\rightarrow I$ is  called \textit{symmetric} if $b\circ c_{M,M}=b$.
It is called \textit{non-singular} if $h_M(b):M\rightarrow M^*$ is an isomorphism. 
\end{definition}
If $b$ is a symmetric bilinear form on an object $M$ in $\Cc$, the couple $(M,b)$ will be called a \textit{quadratic object}
(to keep consistency with the classical notation).
We define morphisms between quadratic objects in the following way: Let $(M,b)$ and $(M',b')$ be two quadratic objects and $f: M\to M'$ a morphism in $\Cc$.
The morphism $f$ is said to be a morphism of quadratic objects if $b' (f\ot f)=b$. 
If $f$ is an isomorphism, it will also be called \textit{isometry}.
\\We have the following lemma:
\begin{lemma}\label{gam}
For $M\in\Cc$ we have a natural isomorphism $M^*\cong \, ^*M$. 
Since the identity functor is naturally isomorphic with $^*(-)^*$, we get a natural isomorphism $\gamma_{M}: M\to (M^{*})^{*}$.
\end{lemma}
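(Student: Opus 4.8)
The plan is to leave the object $M^*$ untouched and instead use the braiding to turn the adjunction that characterizes $M^*$ as a \emph{right} dual into the one that characterizes a \emph{left} dual, concluding by uniqueness of adjoints. Recall that $M^*$ is encoded by the isomorphisms \eqref{dual1}--\eqref{dual2}; in particular \eqref{dual2} reads $\Hom_\Cc(X\ot M,Y)\cong\Hom_\Cc(X,Y\ot M^*)$, naturally in $X,Y$, which says exactly that $-\ot M^*$ is right adjoint to $-\ot M$. A left dual ${}^*M$, on the other hand, is by definition an object for which $M\ot-$ admits ${}^*M\ot-$ as a right adjoint, i.e. $\Hom_\Cc(M\ot X,Y)\cong\Hom_\Cc(X,{}^*M\ot Y)$. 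First I would produce such an isomorphism with $M^*$ in the role of ${}^*M$, by splicing the braiding around \eqref{dual2}:
\[
\Hom_\Cc(M\ot X,Y)\;\cong\;\Hom_\Cc(X\ot M,Y)\;\cong\;\Hom_\Cc(X,Y\ot M^*)\;\cong\;\Hom_\Cc(X,M^*\ot Y),
\]
where the outer two isomorphisms are instances of the braiding $c$ and the middle one is \eqref{dual2}. This exhibits $M^*\ot-$ as a right adjoint of $M\ot-$; since right adjoints are unique up to canonical natural isomorphism, $M^*\ot-\cong{}^*M\ot-$, and evaluating at the unit object $I$ yields $M^*\cong{}^*M$.

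The resulting isomorphism is natural in $M$ because each step is: naturality of $c$, naturality of \eqref{dual2}, and the uniqueness clause for adjoints. Equivalently, and more explicitly, I could instead put a left-duality structure directly on $M^*$ by setting
\[
\ev'_M:=\ev_M\circ c_{M,M^*}:M\ot M^*\to I,\qquad \coev'_M:=c_{M,M^*}\circ\coev_M:I\to M^*\ot M,
\]
and verifying the two zig-zag identities. I expect this verification to be the only genuinely nontrivial point: straightening the two braided cups and caps forces one to slide the crossings produced by $c_{M,M^*}$ past $\ev_M$ and $\coev_M$, which requires the hexagon axioms together with naturality of $c$, and one must check that it is these braidings, rather than their inverses, that make the spurious curl cancel. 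In the adjunction route above the same difficulty reappears, in the milder form of checking that the displayed chain is natural in $M$ and not merely in $X$ and $Y$.

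For the second assertion I would invoke the standard fact that taking left and right duals are mutually inverse operations. Concretely, the very pair $(\ev_M,\coev_M)$ already exhibits $M$ as a left dual of $M^*$ (one simply reads the two zig-zags of the dual pair in the opposite order), so ${}^*(M^*)\cong M$ canonically and naturally; that is, the identity functor is naturally isomorphic to ${}^{*}(-)^{*}$. Applying the natural isomorphism $N^*\cong{}^*N$ from the first part with $N=M^*$ gives $(M^*)^*\cong{}^*(M^*)$, and composing with ${}^*(M^*)\cong M$ produces the desired natural isomorphism $\gamma_M:M\to(M^*)^*$.
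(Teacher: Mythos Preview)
Your proposal is correct and follows essentially the same approach as the paper. The paper's proof is exactly your ``alternative explicit route'': it directly exhibits the dual pair $(M^{*},M,\ev\circ c_{M^*,M}^{-1},\,c_{M,M^*}\circ\coev)$ using the braiding and then invokes uniqueness of duals, without passing through the adjunction language. The only cosmetic difference is that the paper twists the evaluation with $c_{M^*,M}^{-1}$ rather than with $c_{M,M^*}$ as you wrote; you correctly anticipated this as the one genuine subtlety (``it is these braidings, rather than their inverses, that make the spurious curl cancel''), and indeed with the paper's choice the zig-zags follow from naturality of $c$ alone without needing the hexagon axioms. Your adjunction-theoretic argument is a perfectly valid repackaging of the same idea, and your derivation of $\gamma_M$ in the last paragraph is more explicit than the paper's, which simply says ``the statement now follows''.
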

\begin{proof}
Consider the evaluation map $\ev:M^*\ot M\to I$ and the coevaluation map $\coev:I\to M\ot M^*$.
Using the naturality of the braiding $c$, combined with the fact that $(M,M^{*},\ev,\coev)$ is a dual pair in $\Cc$,
one checks that $(M^{*},M,\ev \circ c_{M^*,M}^{-1},c_{M,M^*}\circ \coev)$ is also a dual pair in $\Cc$. 
The statement now follows, since the dual of an object is unique up to isomorphism.
\end{proof}
One of the last things we need in order to define the Witt group of $\Cc$ is the following definition and lemma:
\begin{definition}
 Let $M$ be an object of $\Cc$. We say that $M$ is \textit{weakly symmetric} if ${\rm ev}_M={\rm ev}_M\circ c_{M,M^*}\circ c_{M^*,M}$.
\end{definition}
In particular, if the braiding $c$ is symmetric, then every object is weakly symmetric.
We claim the following lemma:
\begin{lemma}\label{wd}
 Assume that $(M,b)$ is a non-singular quadratic object in $\Cc$. Then $M$ is weakly symmetric.
\end{lemma}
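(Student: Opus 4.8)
The plan is to translate both hypotheses into statements about the morphism $\varphi := h_M(b)\colon M \to M^*$ and then run a short diagram chase using only naturality of the braiding and the duality (zig-zag) axioms. The starting point is the mate correspondence underlying the adjunction isomorphism \eqref{dual2}: unwinding the definition of $h_M$ one finds that $\varphi = h_M(b)$ is characterized by
\begin{equation*} b = \ev_M \circ (\varphi \ot 1_M), \end{equation*}
and since $(M,b)$ is non-singular, $\varphi$ is an isomorphism, so equivalently $\ev_M = b \circ (\varphi^{-1} \ot 1_M)$. Establishing this formula (a routine but careful zig-zag computation with $\ev_M$ and $\coev_M$) is really the only delicate bookkeeping in the argument; everything afterwards is formal.

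Next I would feed symmetry into this. By definition $b \circ c_{M,M} = b$, and naturality of the braiding applied to $\varphi \ot 1_M$ gives $(\varphi \ot 1_M)\circ c_{M,M} = c_{M,M^*}\circ (1_M \ot \varphi)$. Substituting $b = \ev_M \circ (\varphi \ot 1_M)$ into the symmetry relation therefore yields
\begin{equation*} \ev_M \circ c_{M,M^*} \circ (1_M \ot \varphi) = \ev_M \circ (\varphi \ot 1_M). \end{equation*}
Because $\varphi$ is invertible (this is exactly where non-singularity enters), I can precompose with $1_M \ot \varphi^{-1}$ to cancel the $\varphi$ on the left and obtain the key identity
\begin{equation*} \ev_M \circ c_{M,M^*} = \ev_M \circ (\varphi \ot \varphi^{-1}), \end{equation*}
as morphisms $M \ot M^* \to I$. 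This identity is the heart of the proof: up to the twisting isomorphism $\varphi \ot \varphi^{-1}$, the braided evaluation $\ev_M \circ c_{M,M^*}$ agrees with $\ev_M$.

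Finally I would use this identity twice to collapse the double braiding. Starting from $\ev_M \circ c_{M,M^*} \circ c_{M^*,M}$ (the right-hand side of the weak-symmetry condition), apply the key identity to the first two factors, then push $\varphi \ot \varphi^{-1}$ through $c_{M^*,M}$ by naturality of the braiding, which rewrites $(\varphi \ot \varphi^{-1}) \circ c_{M^*,M}$ as $c_{M,M^*}\circ(\varphi^{-1}\ot \varphi)$, and apply the key identity a second time. The result is
\begin{equation*} \ev_M \circ c_{M,M^*}\circ c_{M^*,M} = \ev_M \circ (\varphi \ot \varphi^{-1})\circ(\varphi^{-1}\ot \varphi) = \ev_M, \end{equation*}
since $(\varphi \ot \varphi^{-1})\circ(\varphi^{-1}\ot \varphi) = 1_{M^*\ot M}$, which is precisely the assertion that $M$ is weakly symmetric. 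The only genuine obstacle is the first step, namely pinning down the mate formula for $h_M$ and verifying the orientation of each application of braiding naturality; once that is in place the two hypotheses combine almost mechanically, and non-singularity is used solely to invert $\varphi$ in passing from the symmetry relation to the key identity.
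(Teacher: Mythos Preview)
Your proof is correct and follows essentially the same approach as the paper's. The paper's argument is just organized slightly more directly: it first observes $b\circ c_{M,M}\circ c_{M,M}=b$ (symmetry applied twice), substitutes $b=\ev_M\circ(\varphi\ot 1_M)$, pushes $\varphi$ through both braidings by naturality to obtain $\ev_M\circ c_{M,M^*}\circ c_{M^*,M}\circ(\varphi\ot 1_M)=\ev_M\circ(\varphi\ot 1_M)$, and then cancels $\varphi\ot 1_M$ once at the end, whereas you cancel $\varphi$ earlier to isolate a ``key identity'' and then apply that identity twice---same ingredients, same logic, slightly different bookkeeping.
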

\begin{proof}
Since $b$ is symmetric we have that $b\circ c_{M,M}\circ c_{M,M}=b\circ c_{M,M}=b$.
By using the isomorphisms (\ref{dual1}) and (\ref{dual2}), we find that $b={\rm ev}_M\circ (h_M(b)\ot 1)$.
Using the naturality of the braiding and the fact that $h_M(b)$ is an isomorphism, we get 
${\rm ev}_M\circ c_{M,M^*}\circ c_{M^*,M}={\rm ev}_M$, as desired.
\end{proof}

Let now $(M,b)$ and $(M',b')$ be non-singular quadratic objects. We define:

$$(M,b)\perp(M',b'):=(M\oplus M',b\perp b'),$$
where $b\perp b'=b\oplus b'$. One verifies that this is again a non-singular quadratic object.
Let us now denote by $\Qq$ the category of non-singular quadratic objects in $\Cc$ with isometries as morphisms.
Let us denote by $\Dd$ the subcategory of $\Cc$ of all weakly symmetric objects, with only the isomorphisms as morphisms.
Notice that the tensor product of two weakly symmetric objects is not necessarily weakly symmetric again.
The category $\Dd$ is thus not a monoidal category.
It is true, however, that if $M$ is weakly symmetric, then $M^*$ is also weakly symmetric.

We now proceed with the definition of the \textit{hyperbolic functor}.
\\Let $M$ be an object in $\Dd$. We define $b_{M}:(M\oplus M^{*})\ot (M\oplus M^{*})\rightarrow I$ as follows:
$$b_M(M\ot M) = b_M(M^*\ot M^*) = 0$$
$$b_M|_{M^*\ot M} = {\rm ev}_M\textrm{ and } b_M|_{M\ot M^*} = {\rm ev}_M\circ c_{M,M^*}$$
\begin{lemma}
Let $M$ be an object in $\Dd$ and define
$$\Hh(M)=(M\oplus M^{*}, b_{M}).$$
Then $\Hh$ defines a functor from ${\Dd}$ to $\Qq$, called the \textit{hyperbolic functor}. Moreover, $\Hh(M_1\oplus M_2)\cong \Hh(M_1)\perp \Hh(M_2)$.
\end{lemma}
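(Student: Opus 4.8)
The plan is to verify three things in turn: that $\Hh(M)$ is a genuine object of $\Qq$ (i.e. $b_M$ is a symmetric, non-singular bilinear form), that $\Hh$ sends the isomorphisms of $\Dd$ to isometries compatibly with composition, and finally the additivity isomorphism. First I would establish symmetry of $b_M$. Since $\Cc$ is $k$-linear, the tensor product is biadditive and the braiding $c_{M\oplus M^*,M\oplus M^*}$ splits blockwise into $c_{M,M}$, $c_{M,M^*}$, $c_{M^*,M}$, $c_{M^*,M^*}$; comparing $b_M\circ c_{M\oplus M^*,M\oplus M^*}$ with $b_M$ on each of the four summands of $(M\oplus M^*)\ot(M\oplus M^*)$, both sides vanish on $M\ot M$ and $M^*\ot M^*$, the restriction to $M\ot M^*$ matches automatically because $b_M|_{M\ot M^*}=\ev_M\circ c_{M,M^*}$ by definition, and on $M^*\ot M$ one is left needing $\ev_M\circ c_{M,M^*}\circ c_{M^*,M}=\ev_M$. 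This is exactly the weak symmetry of $M$, which is the conceptual heart of the lemma and explains why $\Hh$ is defined on $\Dd$ rather than on all of $\Cc$.

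For non-singularity I would compute $h_{M\oplus M^*}(b_M)\colon M\oplus M^*\to (M\oplus M^*)^*$ blockwise. Writing $(M\oplus M^*)^*\cong M^*\oplus M^{**}$, the adjunction isomorphism $h$ turns the four restrictions of $b_M$ into a $2\times 2$ array of morphisms: the two blocks coming from $b_M|_{M\ot M}=0$ and $b_M|_{M^*\ot M^*}=0$ contribute zero, whereas the remaining two blocks, governed by $\ev_M$ and $\ev_M\circ c_{M,M^*}$ (the latter an isomorphism $M\to M^{**}$ essentially via $\gamma_M$ of Lemma \ref{gam}), are isomorphisms. A morphism presented by such an array, with its two nonzero off-diagonal blocks invertible and the other two zero, is itself invertible, so $h_{M\oplus M^*}(b_M)$ is an isomorphism and $\Hh(M)\in\Qq$. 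The hard part of the whole argument lives here: correctly unwinding the adjunction $h$ on a direct sum while tracking the braidings that enter the off-diagonal blocks.

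On morphisms I would set $\Hh(f)=f\oplus (f^*)^{-1}$ for an isomorphism $f\colon M\to N$ in $\Dd$ (this makes sense since $f^*$ is invertible). Compatibility with identities and composition is immediate from $(g\circ f)^*=f^*\circ g^*$, which gives $((g\circ f)^*)^{-1}=(g^*)^{-1}\circ(f^*)^{-1}$. The isometry condition $b_N\circ(\Hh(f)\ot\Hh(f))=b_M$ reduces, block by block, to the dinaturality of evaluation, $\ev_N\circ(1_{N^*}\ot f)=\ev_M\circ(f^*\ot 1_M)$, combined with naturality of the braiding; this is a routine diagram chase rather than a genuine obstacle.

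Finally, for additivity I would first note that $\Dd$ is closed under $\oplus$ (so that $\Hh(M_1\oplus M_2)$ is defined): the same blockwise splitting shows $\ev_{M_1\oplus M_2}\circ c_{M_1\oplus M_2,(M_1\oplus M_2)^*}\circ c_{(M_1\oplus M_2)^*,M_1\oplus M_2}=\ev_{M_1\oplus M_2}$ once each $M_i$ is weakly symmetric. Using the canonical isomorphism $(M_1\oplus M_2)^*\cong M_1^*\oplus M_2^*$, under which $\ev_{M_1\oplus M_2}$ splits as $\ev_{M_1}\oplus\ev_{M_2}$ with the cross terms $M_i^*\ot M_j$ ($i\neq j$) contributing zero, I would take the permutation isomorphism $\sigma\colon M_1\oplus M_2\oplus M_1^*\oplus M_2^*\to (M_1\oplus M_1^*)\oplus(M_2\oplus M_2^*)$ and check that it carries $b_{M_1\oplus M_2}$ precisely onto $b_{M_1}\perp b_{M_2}$. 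Indeed the blocks internal to each index $i$ reproduce $b_{M_i}$, while every block mixing an $M_1$-summand with an $M_2$-summand vanishes, matching the zero cross terms of the orthogonal sum; hence $\sigma$ is the desired isometry and $\Hh(M_1\oplus M_2)\cong\Hh(M_1)\perp\Hh(M_2)$.
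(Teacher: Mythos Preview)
Your proof is correct and follows essentially the same approach as the paper: the paper defines $\Hh(f)=f\oplus(f^*)^{-1}$ exactly as you do and relegates the symmetry, non-singularity, and isometry checks to ``direct verification'' and ``easily checked'', which are precisely the blockwise computations you have written out in full. The paper does not even spell out the additivity statement in its proof, so your final paragraph is a welcome addition rather than a deviation.
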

\begin{proof}
A direct verification shows that $h_M(b_M)$ is an isomorphism. The fact that $b$ is symmetric follows from the assumption that $M$ is weakly symmetric.
If $f:M_1\rightarrow M_2$ is an isomorphism in $\Dd$, then $\Hh(f):\Hh(M_1)\rightarrow\Hh(M_2)$ is defined as the direct sum $f\oplus (f^*)^{-1}$.
It is easily checked that $\Hh(f)$ is an isometry.
\end{proof}

Now, in order to define the Witt group, there is one thing left to be checked; namely that $\Hh$ is a cofinal functor. 
We refer to \cite{Quebbemann} for more details about the proof. We mention the fact that $\frac{1}{2}\in k$ is crucial here.

\begin{lemma}\lelabel{invers}
Let $(M,b)$ be a non-singular symmetric quadratic object in $\Cc$. We have the following isometry in $\Qq$:
$$(M,b)\perp (M,-b)\cong \Hh(M).$$

\end{lemma}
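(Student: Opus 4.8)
The plan is to exhibit an explicit isometry and verify the two properties that characterise one: that it is an isomorphism in $\Cc$, and that it carries $b\perp(-b)$ to $b_M$. Note first that $\Hh(M)$ makes sense here: since $(M,b)$ is non-singular, Lemma~\ref{wd} guarantees that $M$ is weakly symmetric, so $M\in\Dd$ and $b_M$ is a symmetric bilinear form. Write $\beta:=h_M(b)\colon M\to M^*$, which is an isomorphism by non-singularity. Classically the isometry $(M,b)\perp(M,-b)\cong\Hh(M)$ is the coordinate change $(x,y)\mapsto(x+y,\tfrac12(\beta x-\beta y))$, and I would transcribe this into $\Cc$ as the morphism $\phi\colon M\oplus M\to M\oplus M^*$ given in block form by
\[
\phi=\begin{pmatrix}1_M & 1_M\\ \tfrac12\beta & -\tfrac12\beta\end{pmatrix},
\]
where the columns are indexed by the two summands of the source and the rows by the summands $M$ and $M^*$ of the target.

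First I would check that $\phi$ is an isomorphism, which is where both $\tfrac12\in k$ and the invertibility of $\beta$ enter. The candidate inverse is
\[
\psi=\begin{pmatrix}\tfrac12\,1_M & \beta^{-1}\\ \tfrac12\,1_M & -\beta^{-1}\end{pmatrix}\colon M\oplus M^*\to M\oplus M,
\]
and a direct block multiplication gives $\psi\phi=1_{M\oplus M}$ and $\phi\psi=1_{M\oplus M^*}$, so $\phi\in\Qq$ will be an isomorphism once the isometry condition is checked.

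The substance of the proof is the isometry identity $b_M\circ(\phi\ot\phi)=b\perp(-b)$, an equality of morphisms $(M\oplus M)\ot(M\oplus M)\to I$. As such a morphism is determined by its restrictions to the four summands $M\ot M$, I would verify it summand by summand. Expanding $\phi$ into its columns and distributing $b_M$ over the resulting four terms, three simplifications are used: the identity $b=\ev_M\circ(\beta\ot 1_M)$ established in the proof of Lemma~\ref{wd}; the naturality of the braiding, in the form $c_{M,M^*}\circ(1_M\ot\beta)=(\beta\ot 1_M)\circ c_{M,M}$, which turns the $M\ot M^*$ component $\ev_M\circ c_{M,M^*}$ of $b_M$ into $b\circ c_{M,M}$; and the symmetry $b\circ c_{M,M}=b$. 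After these, the $M\ot M$ and $M^*\ot M^*$ components of $b_M$ contribute nothing, while the two mixed components each contribute a copy of $b$ weighted by $\pm\tfrac12$; collecting the signs reproduces $b$ on the first summand, $-b$ on the second, and $0$ on the cross terms, which is exactly $b\perp(-b)$.

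The step I expect to be the most delicate is this last componentwise computation, specifically the braiding bookkeeping: one must keep the extra $c_{M,M^*}$ sitting in the $M\ot M^*$ component of $b_M$ under control and rewrite $\ev_M\circ(\beta\ot1_M)$ as $b$ in a manifestly element-free way, rather than relying on the element-level picture that motivates $\phi$. Weak symmetry of $M$ is what makes these braided manipulations coherent and ensures that $b_M$, hence the target, is a genuine quadratic object; the factor $\tfrac12$ is what lets the two symmetric halves $\ev_M$ and $\ev_M\circ c_{M,M^*}$ recombine into a single copy of $b$.
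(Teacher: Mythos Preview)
Your proof is correct and follows essentially the same approach as the paper: the paper first applies the change of basis $\tfrac12\bigl(\begin{smallmatrix}1&1\\1&-1\end{smallmatrix}\bigr)$ on $M\oplus M$ to reach the off-diagonal form $\bigl(\begin{smallmatrix}0&b\\b&0\end{smallmatrix}\bigr)$ and then composes with $1\oplus h_M(b)$ to land in $\Hh(M)$, while you simply compose these two steps into the single matrix $\phi$. The key ingredients---invertibility of $2$, the isomorphism $\beta=h_M(b)$, the identity $b=\ev_M\circ(\beta\ot 1)$, and the appeal to weak symmetry via Lemma~\ref{wd}---are identical.
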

\begin{proof}
Since 2 is invertible in $k$, the map $T:M\oplus M\rightarrow M\oplus M$ given symbolically by $$\frac{1}{2}\begin{bmatrix}1 & 1 \\ 1 & -1 \end{bmatrix}$$ is an isomorphism.
This identifies the quadratic object $(M,b)\perp (M,-b)$ with a quadratic object of the form $(M\oplus M,b')$ where $b'$ is given by the matrix
$$\begin{bmatrix} 0 & b \\ b & 0 \end{bmatrix}$$ By using the isomorphism $h_M(b)\ot 1$ and Lemma \ref{wd}, we get the result.
\end{proof}

We thus have an induced map on $K_0$ groups:
$K_0(\Hh):K_0(\Dd)\rightarrow K_0(\Qq)$. We give the following definition:
\begin{definition}
Let $\Cc$ be a $k$-linear rigid braided monoidal category. We define the \textit{Witt group of $\Cc$}, which we denote by $W(\Cc)$, as follows:\index{Witt group}
$$W(\Cc)={K_{0}(\Qq)}/{\rm Im}(K_{0}(\Hh)).$$
\end{definition}
In other words, elements in $W(\Cc)$ are isomorphism classes of non-singular quadratic objects, where we consider the quadratic objects $\Hh(M)$ to be trivial.
The group operation is just $\perp$. By \leref{invers}, the inverse of the class of $(M,b)$ is the class of $(M,-b)$.

Notice that if $F:\Cc\rightarrow \Ee$ is a braided monoidal functor,
and $M$ is a weakly symmetric object in $\Cc$, then $F(M)$ is a weakly symmetric object in $\Ee$.
This follows from the fact that we have a natural isomorphism $F(M^*)\cong F(M)^*$, with respect to which $F({\rm ev}_M)={\rm ev}_{F(M)}$,
and that, if we denote by $\phi_{M,N}:F(M\ot N)\stackrel{\cong}{\rightarrow} F(M)\ot F(N)$ the natural isomorphism associated with $F$, then
$\phi_{M^*,M}\circ F(c_{M,M^*})\circ \phi^{-1}_{M,M^*}= c_{F(M),F(M)^*}$.
As a result of this, we see that the induced functor $\Qq(F):\Qq_{\Cc}\rightarrow \Qq_{\Ee}$ takes hyperbolic objects to hyperbolic objects.
We thus get an induced map of abelian groups $W(F):W(\Cc)\rightarrow W(\Ee)$.
In particular, by considering the case where $F$ is an equivalence of braided categories, we see that the Witt group is an invariant of braided monoidal categories.
\\We have the following lemma, which we shall use later.
\begin{lemma}\prlabel{orde2}
If $k$ contains a primitive fourth root of unity, then every nontrivial element in $W(\Cc)$ has order two.
\end{lemma}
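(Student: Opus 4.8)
The plan is to exhibit, for an arbitrary non-singular quadratic object $(M,b)$, an explicit isometry between $(M,b)$ and $(M,-b)$, so that the two represent the same class in $W(\Cc)$; combined with \leref{invers} this immediately forces $2[(M,b)]=0$. Since $(M,b)$ is arbitrary, this shows every element of $W(\Cc)$ is $2$-torsion, and hence every nontrivial element has order exactly two.

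First I would fix a primitive fourth root of unity $i\in k$, so that $i^2=-1$, and consider the morphism $f:=i\,1_M:M\to M$, which makes sense precisely because $\Cc$ is $k$-linear. The one computation that matters is the evaluation of $f\ot f$: using that the tensor product of morphisms is $k$-bilinear, I obtain $f\ot f=i^2\,(1_M\ot 1_M)=-1_{M\ot M}$. Next I would check that $f$ is an isometry $(M,b)\to(M,-b)$. Since composition in $\Cc$ is $k$-bilinear, $(-b)\circ(f\ot f)=(-b)\circ(-1_{M\ot M})=b$, which is exactly the defining condition for $f$ to be a morphism of quadratic objects from $(M,b)$ to $(M,-b)$; as $f$ is invertible (with inverse $i^{3}\,1_M=-i\,1_M$, using that $i$ is a unit in $k$), it is an isometry. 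Hence $(M,b)\cong(M,-b)$ in $\Qq$, so $[(M,b)]=[(M,-b)]$ in $W(\Cc)$. By \leref{invers}, the class of $(M,-b)$ is the inverse of the class of $(M,b)$, whence $2[(M,b)]=[(M,b)]+[(M,-b)]=0$, as desired.

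There is essentially no serious obstacle here; the only thing to be careful about is that each scalar manipulation—$f\ot f=i^2(1_M\ot 1_M)$ and $(-b)\circ(-1_{M\ot M})=b$—genuinely uses only the $k$-linearity of $\ot$ and of composition, and not any property of the braiding or the duality. In particular, unlike the argument for \leref{invers}, this one never invokes $h_M(b)$ or weak symmetry: the required isometry is simply a scalar automorphism of the underlying object. (One also notes in passing that $(M,-b)$ is again a non-singular symmetric quadratic object, since $(-b)\circ c_{M,M}=-b$ and $h_M(-b)=-h_M(b)$ is an isomorphism iff $h_M(b)$ is, a fact already used implicitly in \leref{invers}.)
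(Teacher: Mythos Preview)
Your proof is correct and follows essentially the same approach as the paper: both take $f=i\,1_M$, observe that $f\ot f=-1_{M\ot M}$ so that $f$ is an isometry from $(M,b)$ to $(M,-b)$, and then invoke \leref{invers} to conclude that $(M,b)\perp(M,b)$ is trivial in $W(\Cc)$. Your write-up simply makes the $k$-linearity steps a bit more explicit.
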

\begin{proof}
Let us take such a primitive fourth root of unity $i$.
Let $(M,b)$ be an object of $\Qq$, and consider $f=i(\textrm{id}_{M})\in \Hom(M,M)$. 
The morphism $f$ is clearly an isomorphism in $\Cc$ and it is easily verified that $-b(f\ot f)=b$,
so $f$ induces an isometry between $(M,b)$ and $(M,-b)$. \leref{invers} now asserts that $(M,b)\perp (M,b)$ is trivial in $W(\Cc)$.
\end{proof}
Now, suppose the braiding $c$ is symmetric. Let $(M,b)$ and $(M',b')$ be two non-singular quadratic objects.
We define
$$(M,b)\ot(M',b')=(M\ot M',b\square b'),$$
where $b\square b'$ is defined to be the composition of the following isomorphisms:
$$b\square b':M\ot M'\ot M\ot M'\stackrel{1\ot c_{M',M}\ot 1}{\longrightarrow} M\ot M\ot M'\ot M'\stackrel{b\ot b'}{\longrightarrow} I$$

Using the fact that $c$ is a symmetry, it is inmediately checked that $(M,b)\ot(M',b')$ is a non-singular quadratic object as well.
One checks that $\ot$ is well-defined on the objects of $K_{0}(\Qq)$ and that it induces a multiplication on the quotient ${K_{0}(\Qq)}/{\rm Im}(K_{0}(\Hh))$
(we refer the reader to \cite[Chapter 5]{Bass}), so $\ot$ induces an associative and a commutative multiplication on $W(\Cc)$,
for which the isometry class of the object $(I,b_{I})$ is the neutral element (here we put $b_{I}:I\ot I\rightarrow I$ the morphism arising from the unit property of $I$).
So this multiplication endows $W(\Cc)$ with the structure of a commutative, unital ring.
We record this fact in the following lemma:
\begin{proposition}
 If the braiding on $\Cc$ is symmetric, then $W(\Cc)$ has a unital commutative ring structure.
\end{proposition}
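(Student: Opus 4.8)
The plan is to verify the ring axioms for $\otimes$ directly, following the classical template of \cite[Chapter 5]{Bass} but keeping track of the braiding throughout. First I would check that $\otimes$ is well-defined at the level of non-singular quadratic objects, i.e.\ that $(M\otimes M',\,b\square b')$ is again one. Symmetry of $b\square b'$ is exactly where the hypothesis that $c$ is a symmetry enters: composing $b\square b'$ with $c_{M\otimes M',\,M\otimes M'}$ and using naturality of $c$, the hexagon axioms, and the symmetries $b\circ c_{M,M}=b$ and $b'\circ c_{M',M'}=b'$, one can slide the crossings through until the expression returns to $b\square b'$. Non-singularity follows by identifying $h_{M\otimes M'}(b\square b')$ with $h_M(b)\otimes h_{M'}(b')$ (up to the braiding that reorders the dual factors) under the canonical isomorphism $(M\otimes M')^*\cong M'^*\otimes M^*$ available in a rigid braided category; since $h_M(b)$ and $h_{M'}(b')$ are isomorphisms, so is their tensor.

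Next I would establish functoriality: if $f$ and $f'$ are isometries, then so is $f\otimes f'$, which shows that $\otimes$ passes to isometry classes. Distributivity of $\otimes$ over $\oplus$ in $\Cc$ then makes $\otimes$ biadditive, so it induces a multiplication on $K_0(\Qq)$.

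The genuinely delicate step is descent to the quotient $W(\Cc)=K_0(\Qq)/\mathrm{Im}(K_0(\Hh))$: one must show that $\mathrm{Im}(K_0(\Hh))$ is absorbing, i.e.\ that $\Hh(L)\otimes(M,b)$ is trivial in $W(\Cc)$ for every $L\in\Dd$ and every non-singular $(M,b)$. I expect this to be the main obstacle. The approach is to prove the isometry $\Hh(L)\otimes(M,b)\cong\Hh(L\otimes M)$: since $b$ is non-singular, $h_M(b)\colon M\to M^*$ is an isomorphism, so $M$ is self-dual, and using this together with the defining equations for $b_L$ on $L\oplus L^*$ and the rigid-braided identifications of duals one constructs an explicit isometry realising the tensor product as a hyperbolic object. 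The braiding bookkeeping here---matching the crossings coming from $b\square b_L$ with those appearing in $b_{L\otimes M}$---is the part requiring genuine care, and it again uses crucially that $c$ is a symmetry.

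Finally the ring axioms: associativity of $\otimes$ descends from the associativity constraint of $\Cc$ together with a routine check that the two bracketings of $b\square b'\square b''$ agree; the object $(I,b_I)$ is a two-sided unit because the unit constraints of $\Cc$ intertwine $b_I\square b$ with $b$; and commutativity is witnessed by the isometry $c_{M,M'}\colon M\otimes M'\to M'\otimes M$, for which $(b'\square b)\circ(c_{M,M'}\otimes c_{M,M'})=b\square b'$ precisely because $c_{M',M}\circ c_{M,M'}=1$. Assembling these yields the commutative, unital ring structure on $W(\Cc)$.
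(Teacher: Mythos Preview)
Your proposal is correct and follows essentially the same approach as the paper, which itself merely sketches the construction in the paragraph preceding the proposition and defers the verifications to \cite[Chapter~5]{Bass}. Your outline is in fact considerably more detailed than what the paper provides; in particular, the paper does not spell out the absorbing isometry $\Hh(L)\otimes(M,b)\cong\Hh(L\otimes M)$ or the role of the symmetry hypothesis at each step, both of which you identify correctly.
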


\section{The semisimple case}\selabel{semisimple}
In this section, we will consider the case where the category $\Cc$ is moreover semisimple, with finitely many isomorphism classes of simple objects.
We shall also assume that the tensor unit is a simple object, and that our ground field $k$ is algebraically closed of characteristic zero.
In other words, we assume that $\Cc$ is a braided fusion category (see \cite{ENO} for precise definitions).
Let $X_1,\ldots, X_n$ be representatives of the irreducible objects of $\Cc$, where $X_1=I$ is the tensor unit.
For any $i=1,\ldots, n$ we will denote by $i^*$ the unique index for which $(X_i)^*\cong X_{i^*}$.
Notice that this map is an involution, that is: $i^{**}=i$.
This follows from the fact that $\Cc$ is semisimple, and is true also when $\Cc$ is not braided.
For every $i=1,\ldots ,n$ we have $$\Hom_{\Cc}(X_{i^*}\ot X_i,I)\cong \Hom_{\Cc}(X_i,X_i).$$
Thus, the morphism ${\rm ev}_i={\rm ev}_{X_i}:X_{i^*}\ot X\rightarrow I$ forms a basis for the vector space $\Hom_{\Cc}(X_{i^*}\ot X_i,I)$.
It follows that ${\rm ev}_i\circ c_{X_i,X_{i^*}}=d_i{\rm ev}_{i^*}$ for some nonzero scalars $d_i$.
Notice that $d_id_{i^*}$ is not necessarily 1. It will be 1 precisely when $X_i$ is a weakly symmetric object.

Let $(M,b)\in\Cc$ be a non-singular quadratic object in $\Cc$.
For $M_i=\Hom_{\Cc}(X_i,M)$, we have the following isomorphism: $$M\cong \oplus_i M_i\ot X_i$$
(The unit object $I$ allows us to consider ${\Vect}_{k}$ as a subcategory of $\Cc$, so we can talk about tensor products with vector spaces freely).
Using the decomposition into isotropic components, the map $b$ can be written as the sum $b=\sum b_i$ where $b_i:(M_{i^*}\ot X_{i^*})\ot (M_{i}\ot X_{i})\rightarrow I$,
and such a map can be uniquely written as the tensor product of ${\rm ev}_i$ and a linear map $f_i:M_{i^*}\ot M_{i}\rightarrow I$.
(there is no ambiguity here since vector spaces commute trivially with every object in the category).
It is easy to see that $h_M(b):M\rightarrow M^*$ will be an isomorphism if and only if $f_i$ is a perfect pairing for every $i$.
The fact that $b$ is symmetric is equivalent to the following equation:
$$ \forall i ~~f_{i^*}= d_i(f_{i}\circ \tau_{i})$$
where $\tau_i:M_i\ot M_{i^*}\rightarrow M_{i^*}\ot M_i$ is the usual flip for vector spaces.
Notice that this already implies that if $M_i\neq 0$, then $X_i$ is weakly symmetric.

We can write $M$ as $\oplus_{i\neq i^*} (M_i\ot X_i)\oplus_{i=i^*} (M_i\ot X_i)$.
Remark that the first direct summand belongs to the image of the hyperbolic functor.
Indeed, if we let $\{i_1,\ldots,i_k\}$ be a set of indices such that $\{i_1,i_1^*,\ldots,i_k,i_k^*\}$
are all the indices which appear in the first direct summand, and $i_r\neq i_s^*$ for any $r,s=1,\ldots, k$,
then it holds that $\Hh(\oplus_{j}M_{i_j}\ot X_{i_j})=\oplus_{i\neq i^*} M_i\ot X_i$.
We thus see that $M$ is equivalent to $\oplus_{i=i^*} M_i\ot X_i$ in the Witt group of $\Cc$.

Let now $i$ be an index which appears in this sum. Since $i=i^*$ and since we have $d_i d_{i^*}=1$ (because $X_i$ is weakly symmetric), 
we deduce that $d_i=\pm 1$ for every $i$ such that $M_i\neq 0$.
We have two options.
If $d_i=-1$, then the map $f_i:M_i\ot M_i\rightarrow k$ must be antisymmetric.
By finding a Lagrangian subspace for $f_i$, we can show that $f_i$ is similar to a matrix of the form
$\begin{bmatrix} 0 & 1 \\ -1 & 0\end{bmatrix}$. But this implies that $M_i$ can be written as $M_i=N_i\oplus T_i$,
and that $M_i\ot X_i$ will be isomorphic with $\Hh(N_i\ot X_i)$.

We are thus left with the case $d_i=1$. In that case $f_i:M_i\ot M_i\rightarrow k$ is a symmetric bilinear map.
Since the Witt group of any algebraically closed field of characteristic zero is $\ZZ_2$ (see \cite{Lam} e.g.), we conclude that the image of $M_i\ot X_i$ in $W(\Cc)$
will only depend on the parity of $\textrm{dim}(M_i)$. This implies that if $(M,q)$ is a quadratic object then $M$ defines $q$ uniquely up to isomorphism.
We thus arrive at the following proposition:
\begin{proposition}
 Let $\Cc$ be as above. The Witt group of $\Cc$ has a basis over $\ZZ_{2}$ given by $\{X_i\}$ where we take all the indices $i$ such that $i=i^*$ and $d_i=1$.
\end{proposition}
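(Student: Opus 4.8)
The plan is to construct an explicit isomorphism $\Phi\colon W(\Cc)\to \bigoplus_{i\in S}\ZZ_2$, where $S=\{i : i=i^*,\ d_i=1\}$, and to read off the asserted basis from it. The invariant I attach to a non-singular quadratic object $(M,b)$ is the tuple of parities $\big(\dim_k M_i \bmod 2\big)_{i\in S}$, where $M_i=\Hom_{\Cc}(X_i,M)$ is the multiplicity space of $X_i$ in $M$. Essentially all of the analytic input is already assembled in the discussion preceding the statement: the decomposition $M\cong\bigoplus_i M_i\ot X_i$, the factorization $b=\sum_i b_i$ with $b_i={\rm ev}_i\ot f_i$, the translation of non-singularity (each $f_i$ a perfect pairing) and of symmetry ($f_{i^*}=d_i(f_i\circ\tau_i)$), and the fact that any $X_i$ with $M_i\neq 0$ is weakly symmetric, so that $d_id_{i^*}=1$.

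First I would establish the normal form underlying $\Phi$. Splitting $M\cong\bigoplus_{i\neq i^*}M_i\ot X_i\ \oplus\ \bigoplus_{i=i^*}M_i\ot X_i$, the first summand lies in the image of the hyperbolic functor, hence is trivial in $W(\Cc)$, so $(M,b)$ is equivalent to $\bigoplus_{i=i^*}M_i\ot X_i$. For an index with $i=i^*$ we have $d_i^2=1$, so $d_i=\pm1$. If $d_i=-1$ the pairing $f_i$ is alternating; choosing a Lagrangian subspace splits $M_i=N_i\oplus T_i$ and exhibits $M_i\ot X_i$ as $\Hh(N_i\ot X_i)$, again trivial. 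If $d_i=1$ then $f_i$ is a non-degenerate symmetric form over the algebraically closed field $k$; since $W(k)\cong\ZZ_2$, an even-dimensional such form is a sum of hyperbolic planes, so $M_i\ot X_i$ is trivial when $\dim_k M_i$ is even and is equivalent to the class of $(X_i,{\rm ev}_i)$ when it is odd. This chain both defines the normal form and shows that $\Phi$, if well defined, is injective.

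It remains to check that $\Phi$ is a well-defined homomorphism and is surjective. Additivity of multiplicity spaces under $\oplus$ gives $(M\oplus M')_i=M_i\oplus M'_i$, so the parities add; moreover $M_i$ depends only on the isomorphism class of $M$, so $\Phi$ is a well-defined additive map on $K_0(\Qq)$. To descend to $W(\Cc)$ I would verify that $\Phi$ kills the image of $K_0(\Hh)$: for $N=\bigoplus_j N_j\ot X_j$ the multiplicity of $X_i$ in $\Hh(N)=N\oplus N^*$ equals $\dim_k N_i+\dim_k N_{i^*}$, which for $i=i^*$ equals $2\dim_k N_i$ and is therefore even. Surjectivity is immediate, since for each $i\in S$ the object $(X_i,{\rm ev}_i)$ is a non-singular symmetric quadratic object (non-singularity because $i=i^*$ makes ${\rm ev}_i$ induce an isomorphism $X_i\to X_i^*$, symmetry because $d_i=1$) whose image under $\Phi$ is the $i$-th standard basis vector.

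The step I expect to require the most care is the completeness of the invariant, i.e. the injectivity of $\Phi$: one must be sure that \emph{even} multiplicity at every $i\in S$ really forces triviality in $W(\Cc)$. This rests on the two hyperbolicity facts above---that an alternating $f_i$ (case $d_i=-1$) and that an even-rank symmetric $f_i$ over $k$ (case $d_i=1$) both yield hyperbolic quadratic objects after tensoring with $X_i$, the latter via $W(k)\cong\ZZ_2$ and \leref{invers}---together with the bookkeeping that, for $i=i^*$, the scalar $d_i$ is independent of the normalization of ${\rm ev}_i$ and equals $\pm1$ (it is the Frobenius--Schur-type indicator of $X_i$), so that the set $S$ is genuinely well defined. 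Granting these, $\Phi$ is an isomorphism $W(\Cc)\cong\bigoplus_{i\in S}\ZZ_2$, and the classes $\{(X_i,{\rm ev}_i)\}_{i\in S}$ form the claimed $\ZZ_2$-basis.
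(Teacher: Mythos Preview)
Your proposal is correct and follows essentially the same route as the paper: the paper's ``proof'' is the discussion immediately preceding the proposition, which carries out exactly the isotypic decomposition, the elimination of the $i\neq i^*$ and $d_i=-1$ blocks as hyperbolic, and the reduction to $W(k)\cong\ZZ_2$ for $d_i=1$ that you reproduce. Your packaging via the parity map $\Phi$ and the explicit check that hyperbolic objects have even multiplicity at each $i\in S$ is a welcome sharpening---it supplies the linear-independence step that the paper leaves implicit---but the underlying argument is the same.
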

Assume now that the braiding on $\Cc$ is symmetric.
Let $K_0(\Cc)$ be the Grothendieck ring of $\Cc$ (over $\ZZ$).
The Witt ring $W(\Cc)$ of $\Cc$ can be considered as the sub-abelian-group of $K_0(\Cc)\ot_{\ZZ}\ZZ_{2}$
spanned by the elements $X_i$ such that $i=i^*$ and $d_i=1$.
Let $p:K_0(\Cc)\ot_{\ZZ}\ZZ_{2}\rightarrow W(\Cc)$ be the projection whose kernel is spanned by all the $X_i$ such that $i\neq i^*$ or such that $i=i^*$ and $d_i\neq 1$.
The restriction of $p$ to $W(\Cc)$ is thus the identity.
We have the following lemma:
\begin{lemma}\label{multiplication}
The multiplication in $W(\Cc)$ is given by the formula $x\cdot y = p(xy)$ where $x,y\in W(\Cc)$.
\end{lemma}
\begin{proof}
This is straightforward, based on the fact that the product in $W(\Cc)$ is given by tensor multiplication, and by the fact that
if $(M,b)$ is a non-singular quadratic object, then $M$ defines $b$ up to an isomorphism.
\end{proof}
Notice that the map $p$ is not necessarily a homomorphism of rings. The formula $x\cdot y = p(xy)$ is true only as long as  $x,y\in W(\Cc)$.

\section{Examples}\selabel{examples}
In this section we will give some examples, arising from finite groups. The field $k$ will be an algebraically closed field of characteristic zero.
Recall that for a finite group $G$ and a $3$-cocycle $\omega\in H^3(G,k^*)$,
the fusion category ${\Vect}_G^{\omega}$ is the category of all vector spaces graded by $G$,
where the associativity constraint is given by $\omega$.
In other words, if $u\in U$ is of degree $g$, $v\in V$ is of degree $h$ and $w\in W$ is of degree $l$,
then the associativity contraint $(U\otimes V)\otimes W\rightarrow U\otimes (V\otimes W)$
will send $(u\ot v)\ot w $ to $\omega(g,h,l)u\ot (v\ot w)$.

The category ${\Vect}_{\ZZ_2}$ has two possible braidings, which we shall denote by $b^0$ and $b^1$.
The braiding $b^0$ will just be the regular symmetric flip of vector spaces.
The braiding $b^1$ will be the map $${b^1}_{V,W}:V\ot W\rightarrow W\ot V$$ $$v\ot w\mapsto (-1)^{|v||w|}w\ot v.$$
In other words, for homogenous elements it is the regular flip, expect in case that the degree of both $v$ and $w$ is 1, and then it is minus the flip
(we write here the group $\ZZ_2$ additively).
The Witt group arising from $b_0$ will have two basis elements, one arising from each element of the group $\ZZ_2$.
The Witt group of $b_1$ will only have one basis element, arising from the trivial element of the group.
This is because the simple non-unit object $X_2$ has $d_2=1$ in the first case and $d_2=-1$ in the second case.

Let now $\omega\in H^3(\ZZ_2,k^*)$ be the only nontrivial class.
We can write $\omega$ in the following form: $\omega(g,h,l)=1$ unless $g=h=l=1$ and then $\omega(1,1,1)=-1$.
The category ${\Vect}_{\ZZ_2}^{\omega}$ also has two braidings.
The first braiding $b^i$ is given by $$b^i:V\ot W\rightarrow W\ot V$$ $$v\ot w\mapsto i^{|v||w|}w\ot v,$$
and the second braiding $b^{-i}$ is defined using $-i$ instead of $i$ ($i$ satisfies $i^2=-1$).
Both braidings are not symmetric, and both Witt groups will have only one basis element.
The simple non-unit object $X_2$ is not weakly symmetric in this case.
\\\\If $G$ is any finite group, then $\Rep{G}$, the category of finite-dimensional complex representations of a finite group $G$, is a symmetric fusion category, where the symmetry is just the regular flip, denoted $\tau$.
Let $V$ be an irreducible representation. Assume that $V\cong V^*$. Then we have a unique $G$-homomorphism (up to scaling) $d_V:V\ot V\rightarrow k$.
The map $d_V\circ\tau_{V,V}$ is also a $G$-homomorphism. Since $\tau_{V,V}^2=1$, we have that $d_V\circ \tau_{V,V}=\nu_2(V) d_V$ where $\nu_2(V)=\pm 1$.
We say that $V$ is \textit{symmetric} in case $\nu_2(V)=1$, and we say that $V$ is \textit{anti-symmetric}
in case $\nu_2(V)=-1$. The Witt group will then be the $\ZZ_{2}$ vector space with all the irreducible representations $V$ with $V\cong V^*$ and $\nu_2(V)=1$ as a basis.
The multiplication in $W(\Cc)$ will be induced by the multiplication in the character ring of $G$, by using Lemma \ref{multiplication}.
We remark that $\nu_2$ is called the second Frobenius-Schur indicator.
If the character of $V$ is $\psi$, the Frobenius-Schur indicator can be calculated as:
$$\nu_2(V)=\frac{1}{|G|}\sum_{g\in G}\psi(g^2).$$
It is known that if $V\ncong V^*$ then $\nu_2(V)=0$. If $V\cong V^*$, then $\nu_2(V)=\pm 1$ according to the criterion written above.
For more details, see \cite{GorLie}.
\\\\More generally, assume that $G$ is a finite group and that $u\in G$ is a central element of order 2.
Then we can define a new symmetric braided structure on ${\Rep}G$.
If $V$ and $W$ are two irreducible representations, then $u$ acts on $V$ by a scalar $u_V$ and on $W$ by a scalar $u_W$.
We have that $u_V=\pm 1$ and $u_W=\pm 1$. We define the new braiding $c_{V,W}:V\ot W\rightarrow W\ot V$
to be the regular flip if $u_V=1$ or $u_W=1$, and $-1$ times the flip in case $u_V=u_W=-1$. Denote this braided fusion category by $\Rep{G}_{u}$
The new braiding can be written as $c_{V,W}(v\ot w) = \frac{1}{2}(1\ot 1 + 1\ot u + u\ot 1 - u\ot u)(w\ot v)$.
Notice that the Witt group of $\Rep{G}_u$ with that braiding might be different from the Witt group with the usual braiding.
The basis of the Witt group will consist now of the irreducible self-dual representations with $\nu_2(V)u_V=1$.
\\\\If $G$ is any finite group, we can also consider the category of representations of the Drinfeld double $D(G)$ of $G$.
A representation of $D(G)$ will be a $G$-graded vector space $V=\oplus_{g\in G}V_g$ which is also a $G$-representation,
such that $g\cdot V_h\subseteq  V_{ghg^{-1}}$.
The category $\Rep{D(G)}$ is braided. If $V$ and $W$ are two objects of $\Rep{D(G)}$,
and $v\in V$ is of degree $g$, then the braiding is given by $c_{V,W}(v\ot w) = gw\ot v$.
Any irreducible representation $V$ of $D(G)$ will be supported on a single conjugacy class $[g]$,
and will be induced from an irreducible representation $\tilde{V}$ of $C_G(g)$:
$V=kG\ot_{kC_G(g)} \tilde{V}$. The degree of $x\ot v$ will be $xgx^{-1}$.
Such an irreducible representation of $D(G)$ will be self-dual if and only if the following condition holds:
there exists an element $h\in G$ such that $hgh^{-1}=g^{-1}$,
and the irreducible representation $h(\tilde{V})$ of $C_G(g)$ (which is the vector space $\tilde{V}$ upon which $x\in C_G(g)$
acts as $hxh^{-1}$) is isomorphic with $\tilde{V}^*$. The last condition can be understood in the following way:
there is a (unique up to scaling) $C_G(g)$-homomorphism $f:\tilde{V}\otimes \tilde{V}\rightarrow k$ which satisfies:
\begin{equation}\label{strangesymmetry}f(v\ot w) = f(xv\ot h^{-1}xh w)\end{equation} for every $v,w\in \tilde{V}$ and $x\in C_G(g)$.
The $D(G)$-map $V\ot V\rightarrow k$ will then be defined in the following way:
$$b((x\ot v)\ot(y\ot w)) = 0\textrm{ if } h^{-1}x^{-1}y\notin C_G(g)\textrm{, and} $$ $$b((x\ot v)\ot (y\ot w)) = f(v\ot h^{-1}x^{-1}yw)\textrm{ if } h^{-1}x^{-1}y\in C_G(g).$$
The pairing $b$ will then be symmetric if and only if $f$ will satisfy the equation $$f(v\ot w) = f(w\ot h^{-2}g^{-1}v).$$
In general, the function $\hat{f}(v\ot w)= f(w\ot h^{-2}g^{-1}v)$ will still satisfy Equation \ref{strangesymmetry}, 
so it will be equal to $f$ up to a nonzero scalar (since $V$ is irreducible).

As an example of this, we can consider the case where the group $G$ is abelian.
In that case we have that irreducible $D(G)$-representations correspond to pairs $(g,\psi)$ where $g\in G$ and $\psi\in\hat{G}$, the character group of $G$.
Such a representation will be self-dual if and only if $g^2=1$ and $\psi^2=1$. 
It will be symmetric if and only if $\psi(g)=1$.

\begin{remark}
\textcolor{black}{More generally, let $H$ be a finite-dimensional semisimple braided Hopf algebra over an algebraically closed field $k$ of characteristic $0$. 
Then $\Rep{H}$ is a braided fusion category and one can consider in particular self-dual objects in this category, endowed with a quadratic form, in the sense of \deref{def2}. 
Notice that this does not coincide with the notion of self-dual $H$-module endowed with a ``symmetric non-singular bilinear $H$-invariant form'' in the sense of \cite{LinMon}, 
unless the braided structure is the symmetric structure.}
\end{remark}

\section{An invariant for Isocategorical Groups}\selabel{IsoCat}
Let $G$ and $H$ be two finite groups. Recall the following definition:
\begin{definition}[\cite{EtiGel}, Definition 1.1]
$G$ and $H$ are called isocategorical if $\Rep{G}$ is monoidal equivalent to $\Rep{H}$ (without regard for the symmetric structure).
\end{definition}
As already mentioned in the Introduction, isocategorical groups need not be isomorphic.
In spite of this fact, all finite groups isocategorical to a given group can be explicitly classified in group-theoretical terms (cf. \cite[Theorem 1.3]{EtiGel}).
In this section, relying on this description, we will prove that the Witt ring is an invariant for finite isocategorical groups, i.e.
\begin{theorem}\thlabel{IsoCatWitt}
Let $G$ and $H$ be two finite, isocategorical groups. Then $W({\Rep}G)$ and $W({\Rep}H)$ are isomorphic commutative, unital rings.
\end{theorem}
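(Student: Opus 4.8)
The plan is to reduce the statement to the explicit group-theoretic description of isocategorical groups of Etingof and Gelaki, and then to check that the two pieces of data which determine the Witt ring are transported by the monoidal equivalence. By the Proposition describing $W(\Cc)$ in the semisimple case, $W(\Rep G)$ has $\ZZ_2$-basis the classes of the irreducible self-dual $G$-representations $V$ with $\nu_2(V)=1$, and by Lemma~\ref{multiplication} its multiplication is $x\cdot y=p(xy)$, where the projection $p$ is read off from the character ring. It therefore suffices to produce a bijection $\phi\colon\Irr G\to\Irr H$ that (i) preserves dimensions and fusion coefficients, (ii) preserves self-duality, and (iii) preserves $\nu_2$.

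First I would invoke \cite[Theorem 1.3]{EtiGel}: since $G$ and $H$ are isocategorical, there is a normal abelian subgroup $A\trianglelefteq G$ and a $G$-invariant, non-degenerate, alternating bicharacter giving rise to a gauge transformation (Drinfeld twist) $J\in\CC A\ot\CC A\subseteq\CC G\ot\CC G$ such that the twisted Hopf algebra $(\CC G)_J$ (same algebra, comultiplication $\Delta_J=J\,\Delta\,J^{-1}$) is isomorphic, as a Hopf algebra, to $\CC H$. The induced monoidal equivalence $\Rep G\simeq\Rep H$ is the identity on underlying modules with monoidal constraint $J$, so $\phi$ sends the class of $V$ to the same vector space regarded as a $(\CC G)_J=\CC H$-module. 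Points (i) and (ii) are then immediate: the algebra structure is unchanged, so dimensions and characters are literally unchanged; a monoidal equivalence induces an isomorphism of Grothendieck rings (yielding the fusion coefficients); and duality is a monoidal notion, whence $\phi(V)^*\cong\phi(V^*)$ and self-duality is preserved.

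The heart of the argument is (iii). Here I would use the Hopf-algebraic form of the second Frobenius--Schur indicator (cf.\ \cite{LinMon}): for a semisimple Hopf algebra with normalized two-sided integral $\Lambda$ and a module of character $\chi$, one has $\nu_2=\chi\bigl(m\,\Delta(\Lambda)\bigr)$, with $m$ multiplication; for $\CC G$ this recovers $\frac{1}{|G|}\sum_g\psi(g^2)$. Both the integral $\Lambda=\frac{1}{|G|}\sum_g g$ and the counit are intrinsic to the (unchanged) algebra, so they are the same for $(\CC G)_J$; thus the indicator of $\phi(V)$ in $\Rep H$ equals $\chi_V\bigl(m\,\Delta_J(\Lambda)\bigr)$. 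Now the $G$-invariance of the bicharacter means precisely that diagonal conjugation fixes the twist, i.e.\ $(g\ot g)\,J\,(g\ot g)^{-1}=J$ for every $g\in G$; since $\Delta(\Lambda)=\frac{1}{|G|}\sum_g g\ot g$, this gives $\Delta_J(\Lambda)=J\,\Delta(\Lambda)\,J^{-1}=\Delta(\Lambda)$, so $\nu_2$ is preserved on the nose.

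With (i)--(iii) in hand, $\phi$ restricts to a bijection between the $\ZZ_2$-bases of $W(\Rep G)$ and $W(\Rep H)$ and, through the Grothendieck-ring isomorphism, intertwines the projected multiplications $x\cdot y=p(xy)$; hence it extends to an isomorphism of commutative unital rings. I expect the only genuinely delicate point to be (iii): one must pin down the correct normalization of the twisted-indicator formula and verify that the Etingof--Gelaki twist is $G$-invariant in the sense $(g\ot g)J(g\ot g)^{-1}=J$ --- it is exactly this invariance, rather than the alternating or non-degeneracy conditions (which instead guarantee that $(\CC G)_J$ is again a group algebra), that forces $\Delta_J(\Lambda)=\Delta(\Lambda)$. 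Should the twist-invariance computation prove unwieldy, a fallback is to replace (iii) by the Mason--Ng theorem \cite{MasonNg}, which asserts directly that higher Frobenius--Schur indicators are monoidal invariants.
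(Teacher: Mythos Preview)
Your overall architecture coincides with the paper's: reduce everything to showing that the monoidal equivalence $F$ preserves $\nu_2$ on irreducibles, and then read off the Witt-ring isomorphism from the Grothendieck-ring isomorphism together with the description in \seref{semisimple}. Points (i) and (ii) are handled exactly as in the paper.

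The divergence is in step (iii), and there your argument has a genuine gap. You assert that the Etingof--Gelaki twist $J\in\CC A\ot\CC A$ satisfies $(g\ot g)J(g\ot g)^{-1}=J$ for all $g\in G$, on the grounds that ``the bicharacter is $G$-invariant''. But what is $G$-invariant in \cite{EtiGel} is only the \emph{cohomology class} $[\alpha]\in H^2(A^{\vee},k^{\times})^Q$ (equivalently, the skew-symmetric form $R$ of \coref{Corollary1.4}, which is the antisymmetrization of $\alpha$); the cocycle $\alpha$ itself need not be $Q$-invariant. This is exactly why the auxiliary cochains $z(q)$ with $\partial(z(q))=q(\alpha)/\alpha$ enter the construction (Equation~\ref{eq2}). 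Under the Fourier identification $J=\sum_{\chi,\psi}\alpha(\chi,\psi)\,e_\chi\ot e_\psi$, diagonal conjugation by $g$ replaces $\alpha$ by $\bar g\cdot\alpha$, so $(g\ot g)J(g\ot g)^{-1}=J$ would force $\alpha$ to be $Q$-invariant on the nose, which is not available in general. Hence your equation $\Delta_J(\Lambda)=\Delta(\Lambda)$ is unjustified, and the Linchenko--Montgomery computation does not go through as written.

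The paper circumvents this by writing the monoidal constraint of $F$ explicitly as $f_{V,W}(v\ot w)=\alpha^{-1}(\phi,\psi)\,v\ot w$ for $v\in V_\phi$, $w\in W_\psi$, and then computing directly that the induced change in the self-pairing contributes the factor $\alpha(\phi,\phi^{-1})\alpha^{-1}(\phi^{-1},\phi)$, which equals $1$ by the $2$-cocycle identity (equivalently, because $U_{\phi}$ and $U_{\phi^{-1}}$ commute in $k^{\alpha}A^{\vee}$). This calculation uses only that $\alpha$ is a cocycle, not any $G$-invariance of $\alpha$. Your fallback to Mason--Ng \cite{MasonNg} is valid and is in fact the alternative route the paper itself points to, but the direct twist-invariance argument you propose does not stand on its own.
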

In order to prove \thref{IsoCatWitt}, we first briefly recall the result \cite[Theorem 1.3]{EtiGel}, which describes all groups isocategorical to a given group $G$.
Let $A$ be a normal abelian subgroup of $G$ of order $2^{2m}$ for some $m\in \mathbb{N}$.
We have an induced action of $Q:=G/A$ on $A$.
Let $[\alpha]\in H^2(A^{\vee},k^{\times})^Q$ be a $Q$-invariant cohomology class (we denote by $A^{\vee}$ the character group of $A$).
Then for every $q\in Q$, $q(\alpha)/\alpha$ is a trivial two-cocycle. 
We thus choose, for every $q\in Q$, a one-cochain $z(q):A^{\vee}\rightarrow k{^\times}$ such that \begin{equation}\label{eq2}\partial(z(q))= q(\alpha)/\alpha.\end{equation}
A direct calculation shows that for every $p,q\in Q$ the cochain \begin{equation}\label{eq1}b(p,q):=\frac{z(pq)}{z(p)p(z(q))}\end{equation} has trivial coboundary.
It is thus a character of $A^{\vee}$ and therefore belongs to $A^{\vee \vee}\cong A$. 
We can thus think of $b$ as a two-cocycle of $Q$ with values in the $Q$-module $A$.
We define now a new group which we denote by $G_b$.
As a set, $G_b=G$, and the multiplication is given by $$g\cdot_b h  = b(\bar{g},\bar{h})gh.$$
In other words, if we write $G$ as an extension
$$E:1\rightarrow A\rightarrow G\rightarrow Q\rightarrow 1,$$
then $E$ gives rise to a class $[E]\in H^2(Q,A)$. The group $G_b$ can be written similarly, but the resulting two-cocycle will be $E\cdot b$.
\begin{theorem}[\cite{EtiGel}, Theorem 1.3]
 The group $G_b$ is isocategorical with $G$, and any group isocategorical with $G$ is obtained by this construction.
\end{theorem}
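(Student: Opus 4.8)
The plan is to recast isocategoricity as twist-equivalence of group algebras and then to classify the relevant twists of $kG$. First I would fix the standard (forgetful) fiber functor $\omega_G\colon\Rep{G}\to\Vect_k$, whose Tannakian reconstruction returns $kG$. Given a monoidal equivalence $F\colon\Rep{G}\xrightarrow{\sim}\Rep{H}$, composing the forgetful fiber functor of $\Rep{H}$ with $F$ produces a second fiber functor $\omega=\omega_H\circ F$ on $\Rep{G}$. By the Tannakian reconstruction machinery used in \cite{EtiGel}, the two fiber functors are related by a gauge transformation, equivalently a \emph{twist} $J\in kG\otimes kG$ satisfying $(\epsilon\otimes\mathrm{id})J=(\mathrm{id}\otimes\epsilon)J=1$ and the cocycle identity $(\Delta\otimes\mathrm{id})(J)\,(J\otimes 1)=(\mathrm{id}\otimes\Delta)(J)\,(1\otimes J)$. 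Writing $(kG)^J$ for $kG$ with unchanged algebra but comultiplication $J\Delta(\cdot)J^{-1}$, one gets a Hopf-algebra isomorphism $kH\cong(kG)^J$. Thus $G$ and $H$ are isocategorical precisely when $kH$ is isomorphic to a twist of $kG$.

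Second, I would classify twists of $kG$ up to gauge equivalence, invoking the theorem of Movshev: every twist of a group algebra is gauge-equivalent to one supported on an abelian subgroup $A\le G$, and such minimal twists are parametrized by nondegenerate alternating bicharacters on $A$, equivalently by nondegenerate classes in $H^2(A^\vee,k^\times)$ after Pontryagin duality. This is why the data in the statement is phrased on the character group $A^\vee$. Nondegeneracy forces $|A|$ to be a perfect square; the requirement that the twist be involutive -- needed below so that the twisted coproduct stays cocommutative -- further constrains the $2$-part and yields $|A|=2^{2m}$, as in \cite{EtiGel}.

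Third, I would determine when $(kG)^J$ is itself a group algebra, i.e.\ cocommutative. Writing $G$ as the extension $E\colon 1\to A\to G\to Q\to 1$ with $Q=G/A$, cocommutativity of $J\Delta(\cdot)J^{-1}$ is equivalent to the simultaneous conditions that $A$ be normal and that the supporting class $[\alpha]\in H^2(A^\vee,k^\times)$ be $Q$-invariant. Under these hypotheses I would unwind the twisted comultiplication in the dual basis: for each $q\in Q$ the triviality of $q(\alpha)/\alpha$ yields the one-cochains $z(q)$ with $\partial(z(q))=q(\alpha)/\alpha$ of \eqref{eq2}, and the failure of $q\mapsto z(q)$ to be a homomorphism is measured by $b(p,q)=z(pq)/\bigl(z(p)\,p(z(q))\bigr)$ of \eqref{eq1}, a character of $A^\vee$, hence an element of $A^{\vee\vee}\cong A$. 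A direct computation then identifies the twisted product on the dual basis with $g\cdot_b h=b(\bar g,\bar h)\,gh$, so $(kG)^J\cong kG_b$; equivalently the extension class is shifted from $[E]$ to $[E]\cdot b$. Conversely, each choice of normal $A$ and $Q$-invariant nondegenerate $[\alpha]$ produces a genuine involutive twist $J$, so every $G_b$ is isocategorical to $G$, giving both directions.

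The main obstacle is the twist-classification input (Movshev's theorem) together with the bookkeeping that matches the Hopf-theoretic twist to the group-extension data. The delicate points are keeping straight the two layers of duality -- the twist naturally lives on $\Fun(A)=k[A^\vee]$, which is why $\alpha$ is a class on $A^\vee$ while the obstruction $b$ lands in $A^{\vee\vee}\cong A$ -- and verifying that the cochain $b$ of \eqref{eq1} really is a two-cocycle of $Q$ valued in the $Q$-module $A$, so that $G_b$ is a well-defined group. Semisimplicity and $\Char k=0$ guarantee enough roots of unity and nondegeneracy of all forms involved, and are used throughout.
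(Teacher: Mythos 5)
The paper itself offers no proof of this statement---it is quoted verbatim from \cite{EtiGel}---so the only meaningful comparison is with the original Etingof--Gelaki argument. Your overall architecture matches it: a monoidal equivalence yields a second fiber functor on $\Rep{G}$, hence a twist $J$ with $kH\cong (kG)^J$; Movshev's theorem reduces $J$ to a minimal twist supported on an abelian $A$ with nondegenerate class; cocommutativity of $J\Delta(\cdot)J^{-1}$ forces $A$ normal and $[\alpha]$ $Q$-invariant; and the $z$/$b$ bookkeeping of \equref{eq2} and \equref{eq1} identifies the group of group-like elements of $(kG)^J$ with $G_b$. (Two small inaccuracies there: twisting changes the coproduct, not the product, so ``$g\cdot_b h$'' is the multiplication of the corrected group-likes $\tilde g=z(\bar g)^{-1}g$, not a twisted product on the dual basis; and to extract $J$ at all you must first check that $\omega_H\circ F$ is isomorphic to $\omega_G$ as a plain functor, which uses that the equivalence preserves dimensions.)

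The genuine gap is your derivation of $|A|=2^{2m}$. You claim the twist must be \emph{involutive} for the twisted coproduct to stay cocommutative, and that this constrains $A$ to be a $2$-group. That is false: cocommutativity of $\Delta^J$ is equivalent to $J^{-1}J_{21}$ centralizing $\Delta(kG)$, i.e.\ to $G$-invariance of the alternating bicharacter $\tilde\beta(\phi,\psi)=\alpha(\phi,\psi)/\alpha(\psi,\phi)$ on $A^\vee$ (equivalently, since $H^2$ of an abelian group with $k^\times$-coefficients is detected by its alternating form, to $Q$-invariance of $[\alpha]$) together with normality of $A$---no condition on $\tilde\beta^2$ or on $|A|$ appears. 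The paper's own motivating example (\reref{IzuKosaxample}) refutes your mechanism: there $A=\ZZ_4\times\ZZ_4$ and the $G$-invariant skew-symmetric isomorphism $R:A^\vee\to A$ takes primitive fourth roots of unity as values, so the twist is not involutive, yet $(kG)^J$ is cocommutative. The true source of $2^{2m}$ is a reduction step you omit entirely: on any part of $A$ where $\tilde\beta$ has odd exponent $n$, the invariant \emph{class} $[\alpha]$ admits an invariant \emph{cocycle} representative, namely the bicharacter $\tilde\beta^{(n+1)/2}$; choosing that representative one may take $z\equiv 1$, hence $b\equiv 1$, so the odd part of the twist never alters the group. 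Only the $2$-primary part can produce $G_b\ncong G$, and nondegeneracy there gives $A_2\cong B\times B$ of order $2^{2m}$. Without this absorption argument your completeness direction stops at ``$A$ of square order with nondegenerate invariant class'' (supports such as $\ZZ_3\times\ZZ_3$ would survive), the statement as quoted---which allows an \emph{arbitrary} $Q$-invariant class on an order-$2^{2m}$ subgroup---is not reached, and \coref{Corollary1.4} (odd-order groups are categorically rigid) would not follow.
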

\begin{remark} \relabel{IzuKosaxample}
In \cite{IzuKosa}, an (infinite) family of pairs of nonisomorphic, yet isocategorical groups $G^{m}$ and $G^{m}_{b}$ ($m\in \NN$, $m>2$) of order $2^{2m}$ is presented.
It turns out that $G^{3}$ and $G^{3}_{b}$ are the smallest known example of two non-isomorphic, yet isocategorical groups that appear in literature. 
In this example of order 64, the group $A$ is the direct product $\ZZ_4 \times \ZZ_4$ (we denote its generators by $a_1$ and $a_2$). 
The group $Q$, which is isomorphic to the Klein group $\ZZ_2 \times \ZZ_2$ and whose generators we denote by $q_1$ and $q_2$, 
acts on $A$ as follows (addition in indices is modulo 2):
$${q_i}(a_i)=a_i~~;~~{q_i}(a_{i+1})={a_{i}^{2}{a_{i+1}}}.$$ 
The group $G^{3}$ is the semidirect product $G^{3}=Q\ltimes A$.
The cocycle $b$ is given by the formula
$$b(q_1^{t_1}q_2^{t_2},q_1^{r_1}q_2^{r_2}) = a_1^{l_1}a_2^{l_2}$$
where $l_i = \delta_{1,t_i}\delta_{1,r_i}$.
Actually, in all these examples, the class $[E]\in H^2(G^m/A,A)$ is trivial (as these groups $G^{m}$ are semi-direct products),
but for general isocategorical groups this need not to be the case.
\end{remark}
\subsection*{Proof of \thref{IsoCatWitt}}\hfill\\
In order to prove that $W(\Rep G)\cong W(\Rep H)$, we will construct explicitly the monoidal equivalence $F:\Rep G\rightarrow \Rep H$.
We will then show that for every irreducible $V\in \Rep G$ we have that $\nu_2(V)=\nu_2(F(V))$.
Since $F$ induces an isomorphism of the Grothendieck rings of the two categories, it will follow from the results of \seref{semisimple} that the assignment $V\mapsto F(V)$
induces an isomorphism of the Witt rings. 

We know that $H\cong G_b$ for some two-cocycle $b$ arising from the construction above. 
Let $A$, $Q$, $\alpha$, $z$ and $b$ be as in the construction.
We define $F:\Rep G\rightarrow \Rep G_b$ in the following way:
$F(V) = V$ as a vector space. To write the action of $G_b$, let $V=\oplus_{\phi\in A^{\vee}} V_{\phi}$ 
be a direct sum decomposition of $V$ with respect to the characters of $A$.
The action of $g\in G_b$ on $v\in V_{\phi}$ will be given by 
$$g\cdot_b v = z^{-1}(\bar{g})(\bar{g}(\phi))gv.$$
A direct calculation (based on Equation \ref{eq1}) shows that this really defines an action of $G_b$ on $G$.
It is clear that $F$ is an equivalence of categories.
We would like to describe the monoidal structure of $F$ as well.
In other words, we need to find a canonical isomorphisms $f_{V,W}$ between the representations $F(V\ot W)$ and $F(V)\ot F(W)$ of $G_b$.
Let $v\in V_{\phi}$ and $w\in W_{\psi}$ for some $\phi,\psi\in A^{\vee}$.
Then this isomorphism will be given by $$f_{V,W}(v\ot w) = \alpha^{-1}(\phi,\psi)v\ot w$$
Again, a direct calculation (based on Equation \ref{eq2}) shows that this furnishes a monoidal structure on $F$.
Now assume that $V$ is a self-dual irreducible object in $\Rep G$. 
Then we have a $G$-map $r:V\ot V\rightarrow k$.
Since this is in particular an $A$-map, we see that $r(V_{\phi}\ot V_{\psi})=0$ unless $\phi\psi=1$.
We would like to show that $\nu_2(V)=\nu_2(F(V))$.
A direct calculation, using the isomorphism $f$ above, shows that $$\nu_2(F(V))= \alpha(\phi,\phi^{-1})\alpha^{-1}(\phi^{-1},\phi)\nu_2(V)$$
where $\phi$ is any character of $A$ for which $V_{\phi}\neq 0$.
Since $\alpha$ is a two-cocycle, we have that $\alpha(\phi,\phi^{-1})=\alpha(\phi^{-1},\phi)$.
This follows easily from the fact that $U_{\phi}$ and $U_{\phi^{-1}}$ commute in the twisted group algebra $k^{\alpha}A^{\vee}$.
We have that $\nu_2(V)=\nu_2(F(V))$, and we are done.
\begin{remarks}
{\textcolor{white}{Ha ha ha. Writing in white.}}\\
\begin{itemize}
\item If $G$ and $H$ are isocategorical, then $\Rep{D(G)}$ and $\Rep{D(H)}$ are braided equivalent monoidal categories, 
and thus we also have an isomorphism of abelian groups $W(\Rep{D(G)})\cong W(\Rep{D(H)})$.
\item As aimed at in the Introduction, \thref{IsoCatWitt} can also be deduced from Theorem 4.1 in \cite{MasonNg}, 
which states that finite-dimensional semisimple quasi-Hopf algebras $K$ and $K'$ (over $\CC$) having tensor equivalent categories of finite-dimensional representations, 
necessarily have identical families of Frobenius-Schur indicators. This is a more general result than the one presented above. However, the proof we give here, 
using the group-theoretical description of finite isocategorical groups, could be considered to be more explicit (than the cited Theorem of Mason and Ng's) in case $K=\CC G$ and $K'=\CC H$.
\end{itemize}
\end{remarks}

\section{Application}\selabel{Application}
In this section, we apply \thref{IsoCatWitt} to obtain examples of finite groups that have isomorphic character rings, yet are not isocategorical.
The result just below \cite[Corollary 1.4]{EtiGel} already gives us a powerful result to detect such groups.
Let us recall a finite group $G$ is called \textit{categorically rigid} if any group isocategorical to $G$ is actually isomorphic to $G$.
\begin{corollary}[Etingof-Gelaki]\colabel{Corollary1.4}
If $G$ is not categorically rigid then $G$ admits a normal abelian subgroup $A$, of order $2^{2m}$, $m\geq 1$, equipped with a skew-symmetric $G$-invariant isomorphism $R:A^{\lor}\to A$.
In particular, all groups of order $2k+1$, $2(2k+1)$, and all simple groups are categorically rigid.
\end{corollary}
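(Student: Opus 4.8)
The plan is to derive this statement from the classification theorem \cite[Theorem 1.3]{EtiGel} recalled just above, together with standard facts about the cohomology of finite abelian groups. First I would unwind the definition: if $G$ is not categorically rigid there is a group $H\not\cong G$ that is isocategorical to $G$, so by \cite[Theorem 1.3]{EtiGel} we have $H\cong G_b$ for some data $(A,[\alpha],z,b)$ as in the construction, where $A$ is a normal abelian subgroup of $G$ of order $2^{2m}$, $Q=G/A$, and $[\alpha]\in H^2(A^\vee,k^\times)^Q$. I would immediately record that $m\geq 1$: if $A$ were trivial, then $A^\vee$ is trivial, $[\alpha]$ and $b$ are trivial, and $G_b=G$, contradicting $H\not\cong G$.

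Next I would extract the pairing from $[\alpha]$. Because $k$ is algebraically closed of characteristic $0$, the commutator construction $[\alpha]\mapsto\beta$, with $\beta(\phi,\psi)=\alpha(\phi,\psi)\alpha(\psi,\phi)^{-1}$, is a well-defined isomorphism $H^2(A^\vee,k^\times)\cong\Alt(A^\vee)$ onto the group of alternating bicharacters $A^\vee\times A^\vee\to k^\times$; in particular $\beta(\phi,\phi)=1$. Under the canonical identification $A^{\vee\vee}\cong A$, the bicharacter $\beta$ is precisely the datum of a homomorphism $R\colon A^\vee\to A$, $R(\phi)=\beta(\phi,-)$, where the alternating property of $\beta$ becomes skew-symmetry of $R$, and the $Q$-invariance of $[\alpha]$ recorded in (\ref{eq2}) becomes $Q$-equivariance of $R$ — hence $G$-invariance, since $A$, being abelian, acts trivially on itself by conjugation. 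Moreover $\beta$ is nontrivial: a direct inspection of the construction shows that a trivial class $[\alpha]$ lets one take $\alpha\equiv 1$, forcing $z(q)\equiv 1$ and $b\equiv 1$ in (\ref{eq1}), so that $G_b=G$; as $H\not\cong G$, this rules out $[\alpha]=0$.

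The main obstacle is to upgrade $R$ from a nonzero skew map to an \emph{isomorphism}, and to pin down the order as $2^{2m}$. For this I would pass to the nondegenerate part of $\beta$. Its radical $\rad(\beta)\leq A^\vee$ is $Q$-invariant, so the annihilator $A':=\rad(\beta)^{\perp}$ in $A$ is again a normal abelian subgroup of $G$, with $(A')^\vee\cong A^\vee/\rad(\beta)$; on the latter $\beta$ descends to a \emph{nondegenerate} alternating bicharacter, i.e. to a skew-symmetric, $G$-invariant isomorphism $R'\colon (A')^\vee\to A'$. Since $\beta\neq 1$ we have $A'\neq 1$; and because $A'$ is a finite abelian $2$-group carrying a nondegenerate alternating pairing, the structure theory of such pairings (which splits the group into hyperbolic pairs $C\oplus C$) forces $|A'|$ to be a perfect square, hence $|A'|=2^{2m'}$ with $m'\geq 1$. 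Relabelling $(A',R')$ as $(A,R)$ yields exactly the asserted structure, so the first assertion holds.

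Finally I would deduce the special cases by the contrapositive of what has just been proved: a normal abelian subgroup of order $2^{2m}$ with $m\geq 1$ has order divisible by $4$. If $|G|=2k+1$ or $|G|=2(2k+1)$, then $4\nmid|G|$, so no such subgroup exists and $G$ must be categorically rigid. If $G$ is simple, its only normal subgroups are $1$ and $G$; an abelian subgroup of order at least $4$ cannot be $1$, and it can be $G$ only if $G$ is abelian, forcing $G\cong\ZZ_p$, which has no subgroup of order $2^{2m}\geq 4$. In every case the required $A$ is absent, so all groups of these three types are categorically rigid.
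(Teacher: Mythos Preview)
The paper does not give its own proof of this corollary: it is stated as a result of Etingof and Gelaki and simply cited from \cite[Corollary 1.4]{EtiGel}. There is therefore nothing in the present paper to compare your argument against.

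That said, your derivation from \cite[Theorem 1.3]{EtiGel} is essentially the intended one and is correct. Two small comments. First, when you write that a trivial class $[\alpha]$ ``forces'' $z(q)\equiv 1$ and $b\equiv 1$: strictly speaking it does not force these choices, it merely allows them; what you are really using is that the isomorphism type of $G_b$ depends only on $[\alpha]$ (different choices of representative $\alpha$ and of the $z(q)$ change $b$ only by a coboundary in $Z^2(Q,A)$), so that $[\alpha]=0$ indeed gives $G_b\cong G$. Second, the passage to the nondegenerate quotient is exactly right: the radical of $\beta$ is $Q$-stable because $[\alpha]$ is $Q$-invariant, its annihilator $A'\leq A$ is therefore normal in $G$, and the induced nondegenerate alternating bicharacter on $(A')^{\vee}$ forces $A'\cong C\times C$ for some abelian $2$-group $C$, whence $|A'|=2^{2m'}$ with $m'\geq 1$. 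The deduction of the three special cases by contrapositive is clean and complete.
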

Now, the description of $W(\Rep{G})$ from \seref{examples} allows us to compute $W(\Rep{G})$ in a rather quick fashion 
(where one can use a computer algebra system such as GAP if one wishes to do so).
As a first example, consider the two non-abelian groups of order 8: 
$$D_{8}=\langle a,b|a^{4}=b^{2}=1,b^{-1}ab=a^{-1}\rangle~ \textrm{and} ~Q_{8}=\langle a,b|a^{4}=1,b^{2}=a^{2},b^{-1}ab=a^{-1}\rangle.$$
These groups have isomorphic character rings. Both groups have 4 irreducible representations of degree 1,
their corresponding characters being denoted by $\chi_{i}^{D}$ and $\chi_{i}^{Q}$ respectively ($i=1,2,3,4$), and 1 irreducible representation of degree 2, denoted resp.
$\chi_{5}^{D}$ and $\chi_{5}^{Q}$. One easily computes that $\nu_{2}(\chi_{i}^{D})=1$ for $i=1,2,3,4,5$ and that $\nu_{2}(\chi_{i}^{Q})=1$ for $i=1,2,3,4$, but $\nu_{2}(\chi_{5}^{Q})=-1$, which implies that $W({\Rep}D_{8})$ and $W({\Rep}Q_{8})$ are not isomorphic. So we recover the fact that $D_8$ and $Q_8$ are not isocategorical
(which is a special case of a result first proven in \cite{TambaraYam}).
Remark that this result also follows from \coref{Corollary1.4}, as all normal subgroups $A$ of order $4$ of $Q_8$ are cyclic, and hence do not admit any skew-symmetric form $R:A^{\lor}\to A$.
\\The groups $D_8$ and $Q_8$ are by no means rare examples of non-isomorphic groups having the same character table. In fact, there are many more examples of this sort among $2$-groups
(actually, Davydov \cite{Davydov} proved that finite groups have isomorphic complex character rings if and only if their group algebras are twisted forms of one another).
\subsection{Order 16}\selabel{Order 16}
It is known that, up to isomorphism, there are precisely $9$ non-isomorphic, non-abelian groups of order $16$ (cf. \cite{GorLie}, pages 304-307 e.g.).
Amongst those -and besides $D_8 \times \ZZ_2$ and $ Q_8 \times \ZZ_2$- the following two pairs of groups have isomorphic character rings, the first couple being
$$G_1=\langle a,b|a^{8}=b^{2}=1,b^{-1}ab=a^{-1}\rangle\cong D_{16}~~~\textrm{and}~~~G_2=\langle a,b|a^{8}=1,b^{2}=a^{4},b^{-1}ab=a^{-1}\rangle,$$
and the other such pair of groups:
$$G_{3}=\langle a,b,z|a^{4}=1,b^{2}=z,z^{2}=1,b^{-1}ab=az\rangle~~~\textrm{and}~~$$
$$G_4 =\langle a,b,z|a^{4}=1,b^{2}=z^{2}=1,b^{-1}ab=az, za=az,bz=zb\rangle.$$
We first take a look at $G_1$ and $G_2$. Let $\chi$ be the irreducible character of degree $2$, for which $\chi(a^{4})=2$, let $\psi$ be the irreducible character of degree $2$ for which
$\psi(a)=\sqrt{2}$ and let $\kappa$ be the irreducible character of degree $2$, for which $\kappa(a)=-\sqrt{2}$.
Then one computes that
$$\nu_{2}(\chi)=\nu_{2}(\psi)=\nu_{2}(\kappa)=+1 ~~\textrm{for}~G_{1}~~\textrm{and}~\nu_{2}(\chi)=+1 ~~\textrm{for}~G_{2}~\textrm{but}~\nu_{2}(\psi)=\nu_{2}(\kappa)=-1.$$
We thus have that $W(\Rep{G_1})\not\cong W(\Rep{G_2})$, so $G_1$ and $G_2$ are not isocategorical.
Remark that this result can also be obtained from \coref{Corollary1.4}. Indeed, $G_1$ and $G_2$ are both categorically rigid since all of their normal subgroups
are cyclic (actually, they are the only non-abelian groups of order $16$ having this property).
\\Now, let us turn our attention to the groups $G_3$ and $G_4$.
Let $\chi$, resp. $\psi$, be the irreducible characters of degree $2$ such that $\chi(a^2)=2$, resp. $\psi(a^{2})=-2$. Then one computes that
$$\nu_{2}(\chi)=+1~~;~~\nu_{2}(\psi)=-1 ~~\textrm{for}~G_{3}~~\textrm{and}~\nu_{2}(\chi)=\nu_{2}(\psi)=+1 ~~\textrm{for}~G_{4}.$$
So here as well, \thref{IsoCatWitt} allow us to conclude that $G_3$ and $G_4$ are not isocategorical.
Observe that this result can not immediately be deduced from \coref{Corollary1.4}, as both groups have the Klein group $\ZZ_2\times \ZZ_2$ as a normal subgroup and hence, since
$H^{2}(\ZZ_2\times \ZZ_2,\CC^{*})\cong \ZZ_2$, they do admit a skew-symmetric isomorphism from $\ZZ_2\times \ZZ_2$ to its character group $(\ZZ_2\times \ZZ_2)^{\lor}$.
Finally, consider $D_8\times \ZZ_2$ and $Q_8\times \ZZ_2$; they are non-isocategorical as well, as can be checked as well by computing their Witt rings.
\\Actually, the ring of characters combined with the Witt ring, together with \coref{Corollary1.4}, are sufficient to decide that all groups of order less than $32$ 
are categorically rigid, which is a known result 
(cf. \cite[Theorem 6.3]{Shimizu} e.g. for an alternative proof of this fact). It is then natural to ask the following question: Let $G$ and $H$ be two finite groups. Suppose that $K_0(\Rep{G})\cong K_0(\Rep{H})$, $W(\Rep{G})\cong W(\Rep{H})$ 
and that both groups have the same number of self-dual irreducible representations. Does this imply that $G$ and $H$ are isocategorical?
The answer to this question is negative. Shimizu has proved in \cite{Shimizu} that if $G$ and $H$ are isocategorical, 
then the number of elements of order $n$ in $G$ is equal to the number of elements of order $n$ in $H$, for every $n$.
We will give here an example of a pair of groups which have isomorphic Grothendieck rings, isomorphic Witt rings and the same number of irreducible self-dual representations, 
but do not have the same number of elements of order 4, and are therefore not isocategorical.
The following more specific question then arises: assume that $G_1$ and $G_2$ have isomorphic Grothendieck rings, isomorphic Witt rings,
the same number of self-dual irreducible representations, and the number of elements of order $n$ in $G_1$ and $G_2$ is the same for every $n$. 
Does that mean that $G_1$ and $G_2$ are isocategorical? We will give an example here below which proves that the answer to that question is also negative.

\subsection{Order 32}\selabel{Order 32}
Among the non-abelian groups of order $32$ we have the following two groups
$$G:=(({\ZZ}_{4} \times \ZZ_{2}) \rtimes \ZZ_{2}) \rtimes \ZZ_{2}~~~~\textrm{and}~~~~H:=(\ZZ_{8} \rtimes \ZZ_{2}) \rtimes \ZZ_{2}$$
(which can be defined using GAP's SmallGroup function as $G=\textrm{SmallGroup}(32,6)$ and $H=\textrm{SmallGroup}(32,7)$).
One checks that $K_0(\Rep{G})\cong K_0(\Rep{H})$, $W(\Rep{G})\cong W(\Rep{H})$ and that both groups have precisely 7 self-dual irreducible representations.
However, $G$ has 20 elements of order 4, whereas $H$ has only 4 elements of this order, so by Theorem 1.1 from \cite{Shimizu}, $G$ and $H$ cannot be isocategorical.
\\In fact, there is only one pair of groups $G_1,G_2$ of order 32 such that $K_0(\Rep{G_1})\cong K_0(\Rep{G_2})$, 
$W(\Rep{G_1})\cong W(\Rep{G_2})$ and such that both groups have the same number of self-dual irreducible representations 
and the same number of elements of a given order. In terms of generators and relations, one can describe these groups as follows 
(these groups correspond to $\textrm{SmallGroup}(32,27)=G_1$ and $\textrm{SmallGroup}(32,34)=G_2$):
$$G_1:=\langle a,b,c,d,e| a^2,b^2,c^2,d^2,[a,b],[a,c],[a,d],[b,c],[b,d],[c,d], [e,c]=a,[e,d]=b\rangle$$
$$G_2:=\langle a,b,c| a^4,b^4, c^2,[a,b], [c,a]=a^2,[c,b]=b^2\rangle.$$
One can easily check that $G_1\cong \ZZ_2\ltimes (\ZZ_2)^4$, where the first $\ZZ_2$ copy is generated by $e$ and $(\ZZ_2)^4$ is generated by $a,b,c,d$,
and that $G_2\cong \ZZ_2\ltimes (\ZZ_4)^2$ where the first copy of $\ZZ_2$ is generated by $c$ and $(\ZZ_4)^2$ is generated by $a$ and $b$.
Also, one can check that both groups have isomorphic Witt rings, Grothendieck rings, and that the number of irreducible self dual representation is 10 in both cases.

We claim that the two groups are not isocategorical. We will check this directly, by considering all normal abelian subgroups of order 4 and 16.
(

Subgroups of order 4:\\
Assume that $G_2$ has a normal subgroup $H$ of order 4.
If $H$ contains an element of the form $ca^ib^j$ for some $i,j$, then by using the normality of $H$ we can see that $H$ necessarily contains $a^2$ and $b^2$.
But then the order of $H$ is at least 8 which is a contradiction.
The subgroup $H$ is thus contained in the subgroup generated by $a,b$.
We only care about abelian subgroups which have a nontrivial second cohomology group.
In the case of groups of order 4, this means that we must have $H=\ZZ_2\times \ZZ_2$.
But there is only one subgroup of $\langle a,b\rangle$ of this form, namely $\langle a^2,b^2\rangle$.
Since this subgroup is central in $G_2$, we will not get any new groups isocategorical to $G_2$ this way.

Subgroups of order 16:\\
A subgroup $H$ of order 16 of $G_2$ will necessarily contain $a^2$ and $b^2$.
The subgroup $H$ must contain an element of the form $a^ib^j$ where $i$ or $j$ are odd.
If $H$ contains also an element of the form $ca^kb^l$, then $H$ is not abelian.
We therefore conclude that $H$ is contained in $\langle a,b\rangle$.
But since this subgroup has order 16 we conclude that $H=\langle a,b\rangle$.
In other words: the only subgroup of $G_2$ of order 16 that might be relevant for us is $\langle a,b\rangle$.

Let now $H$ be a subgroup of order 16 of $G_1$.
As can easily be seen, $H$ must contain $a$ and $b$.
Similar to the $G_2$ case, $H$ cannot contain an element of the form $ea^ib^jc^kd^l$, as otherwise it would not be abelian.
So $H$ is contained in $\langle a,b,c,d\rangle$ and since they have the same order, $H=\langle a,b,c,d\rangle$.
So the only abelian normal subgroup of $G_1$ of order 16 is $\langle a,b,c,d\rangle$.
But since this subgroup of $G_1$ is not isomorphic with the only abelian normal subgroup of $G_2$, they cannot be isocategorical via this subgroup. We are done.
\\\\One verifies, again only using the ring of characters, the Witt ring and \coref{Corollary1.4}, that all groups of order greater than 32 and less than 64 are categorically rigid.
We can thus resume the results of this section in the following proposition:
\begin{proposition}
All groups of order less than 64 are categorically rigid.
\end{proposition}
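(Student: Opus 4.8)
The plan is to assemble the claim from the three ingredients the paper has already built: the Etingof--Gelaki classification (\coref{Corollary1.4} and \thref{IsoCatWitt}), the computation of Witt rings from Frobenius--Schur indicators in \seref{semisimple}, and the explicit verification carried out in \seref{Order 32}. The statement that all groups of order less than 64 are categorically rigid is really a finite case-check: one must show that for every $n<64$ there is no pair of non-isomorphic isocategorical groups of order $n$. So first I would stratify by order and eliminate as many orders as possible by cheap structural criteria before resorting to any representation-theoretic computation.

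The first and largest reduction comes from \coref{Corollary1.4}: if $G$ is not categorically rigid, then $G$ has a normal abelian subgroup $A$ of order $2^{2m}$ with $m\ge 1$ carrying a skew-symmetric $G$-invariant isomorphism $A^\vee\to A$. In particular $4\mid |G|$, and $A$ (of order at least $4$) must admit such a skew-symmetric form, which forces $A$ to be non-cyclic of the form $\ZZ_{2^{r_1}}\times\cdots$. This immediately disposes of every odd order, every order of the form $2(2k+1)$, and every order not divisible by $4$; it also kills orders where a Sylow analysis shows no suitable normal abelian $2$-subgroup can exist. So I would reduce the range $n<64$ to the genuinely dangerous orders divisible by $4$ (essentially $16,32,48$ and related cases), since $8$ and the small ones are handled either by \coref{Corollary1.4} or by the worked examples $D_8,Q_8$.

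For the surviving orders I would invoke \thref{IsoCatWitt} together with the description of $W(\Rep G)$ as the $\ZZ_2$-span of the self-dual irreducibles $V$ with $\nu_2(V)=1$. The strategy for each remaining order is: enumerate the isomorphism types (using \coref{Corollary1.4} to restrict attention to those with an eligible normal abelian $A$), and for any two candidates that share a character ring, compare the multisets of Frobenius--Schur indicators via $\nu_2(V)=\frac{1}{|G|}\sum_{g\in G}\psi(g^2)$; a discrepancy gives $W(\Rep G)\not\cong W(\Rep H)$ and hence non-isocategoricity by \thref{IsoCatWitt}. Orders $16$ and $32$ are exactly the cases treated explicitly in \seref{Order 16} and \seref{Order 32}, where the Witt ring separates every relevant pair except the single pair $G_1,G_2=\textrm{SmallGroup}(32,27),\textrm{SmallGroup}(32,34)$, which is instead handled by the direct normal-subgroup argument already given there. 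The remaining orders in $(32,64)$ divisible by $4$ are dispatched the same way, using a computer algebra system (GAP) to list groups, compute characters, indicators, and where necessary to check the normal-abelian-subgroup obstruction of \coref{Corollary1.4}.

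The main obstacle is precisely the residual pair at order $32$: two groups can have isomorphic Grothendieck rings, isomorphic Witt rings, and identical element-order statistics, so none of the ``cheap'' invariants decides them. For that pair one cannot avoid the group-theoretic analysis of \thref{IsoCatWitt}'s hypotheses directly --- classifying the normal abelian subgroups $A$ of order $4$ and $16$ admitting a $G$-invariant skew-symmetric isomorphism $A^\vee\to A$, and checking that in each case the resulting $G_b$ is isomorphic to $G$ itself. This is the step carried out in \seref{Order 32}, where the order-$16$ subgroups are pinned down to $\langle a,b\rangle$ in $G_2$ versus $\langle a,b,c,d\rangle$ in $G_1$, and these are seen to be non-isomorphic as abstract groups, ruling out an isocategorical equivalence through them. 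Once this single stubborn pair is resolved, assembling the finitely many verifications into the blanket statement ``all groups of order less than $64$ are categorically rigid'' is routine bookkeeping.
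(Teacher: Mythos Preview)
Your proposal is correct and matches the paper's approach: the proposition is presented there not with a new argument but as a summary of the preceding computations, combining \coref{Corollary1.4}, the character-ring and Witt-ring (Frobenius--Schur) comparisons from \seref{Order 16} and \seref{Order 32}, the direct normal-subgroup analysis of the single stubborn pair $\textrm{SmallGroup}(32,27)$ versus $\textrm{SmallGroup}(32,34)$, and a GAP verification for the remaining orders between $32$ and $64$. Your stratification by order, reduction via \coref{Corollary1.4}, and identification of the one case requiring the explicit Etingof--Gelaki subgroup check are exactly how the paper organizes the argument.
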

This makes the groups $G^{3}$ and $G^{3}_{b}$ considered in \reref{IzuKosaxample} indeed the smallest possible example of isocategorical, yet non-isomorphic groups.
\subsection*{Acknowledgements}
The first named author was supported by the grant G.0117.10 from the Fund for Scientific Research-Flanders (Belgium) (F.W.O.-Vlaanderen). 
He would also like to thank Philippe Cara for fruitful discussions.
The second author was supported by the Danish National Research Foundation through the Centre for Symmetry and Deformation.

\end{document}